\newtheorem{theorem}{\bf Theorem}[section]
\newtheorem{lemma}[theorem]{\bf Lemma}
\newtheorem{proposition}[theorem]{\bf Proposition}
\newenvironment{proof}{\noindent{\em Proof:}}{\quad \hfill$\Box$\vspace{2ex}}
\newcommand{\figcaption}{\def\@captype{figure}\caption}
\newcommand{\tabcaption}{\def\@captype{table}\caption}
\def \aN {\Bbb N}
\def \aC {\Bbb C}
\def \aZ {\Bbb Z}
\def \aR {\Bbb R}
\begin{document}

\title{\bf Numerical analysis for the moments of highly oscillatory Bessel functions and Bessel-trigonometric functions}
\author{ Yinkun Wang
    \footnotemark[2]
    \footnotemark[3],
    Ying Li
    \footnotemark[2],
and
    Jianshu Luo
    \footnotemark[2]}

\renewcommand{\thefootnote}{\fnsymbol{footnote}}

\footnotetext[2]{College of Science, National University of Defense Technology, Changsha 410073, People's Republic of China. }
\footnotetext[3]{Corresponding author. Email address: yinkun5522@163.com.}

\maketitle{}

\begin{abstract}
The moments of highly oscillatory Bessel functions and Bessel-trigonometric functions play a basic role in many practical problems and numerical analysis. This paper presents a complete analysis for these moments based on the recursive relations of Bessel functions. To evaluate the moments of Bessel functions numerically, a fast and efficient scheme is also proposed to approximate the integral of Bessel function of the first kind and of zero order. The moments of Bessel-trigonometric functions are proved to be expressed in a closed form. In the numerical results, the accuracy and efficiency of the proposed analysis for the moments of Bessel functions is validated first and then by comparing the existing methods, a better scheme for the moments of Bessel functions is presented.
\end{abstract}

\textbf{Key words}: Bessel functions; Bessel-trigonometric functions; Oscillatory integration; Moments
%

\section{Introduction}

The integrations containing Bessel functions play an important role in many practical problems in physics, chemistry and engineering. There have been developed lots of efficient numerical quadratures in evaluating these integrations. One kind of commonly used quadratures is the Filon method and its further development, Filon-type method. It is well-known that the Filon method and Filon-type method require that the corresponding the moments of Bessel functions are known or can be evaluated fast and accurately. Moreover, the moments of Bessel functions  often forms the linear systems in the solution of integral equations with Bessel kernel by collocation methods with a polynomial or piecewise polynomial base. Therefore, the explicit expressions or fast efficient numerical analysis of related moments of Bessel functions is urgently required in the science and engineering.

The focus of this paper is to present efficient numerical analysis for the corresponding moments of Bessel functions which have the form
\[I_1(n,m,\kappa,b):= \int_0^b t^nJ_{m}(\kappa t)dt\]
and
\[I_2(n,m,\kappa,b):=\int_0^b t^ne^{ i\kappa t}J_{m}(\kappa t)dt\]
where $b,\kappa\in \aR$, $|b|\leq1$, $n,m\in\aN_0:=\aN\cup\{0\}$ and $J_m$ is the Bessel function of the first kind and order $m$.

The first integral, $I_1$, has already been used in the applications and there exist some explicit formulas in the form of infinite series or in the form of other special functions. They are given as follows (see for example in \citep{LUKE1962} pp. 44, 51-52, 85, \citep{STEGUN} pp. 480, and \citep{BATEMAN1954} pp. 22, \citep{WATSON1952}, pp. 350):

\quad\quad\quad $I_1(n,m,\kappa,b)$
\begin{align}
     &=\frac{\kappa^mb^{n+m+1}}{2^m(n+m+1)\Gamma(m+1)}{_1}F_2\left( \frac{n+m+1}{2},\frac{n+m+3}{2};m+1;-\frac{\kappa^2 b^2}{4}\right) \label{n6e21}\\
    & =\frac{b^n}{\kappa (n+m+1)}\sum_{j=0}^\infty (2j+m+1)\frac{m+1-n}{m+3+n}\ldots \frac{m+2j-1-n}{m+2j+1+n}J_{2j+m+1}(\kappa b) \label{n6e22} \\
    &=\frac{2^n\Gamma\left(\frac{m+n+1}{2}\right)}{\kappa^{n+1}\Gamma\left(\frac{m-n+1}{2}\right)} +\frac{b}{\kappa^n} \left[(n+m-1)J_{m} (\kappa b) s_{n-1,m-1}^{(2)}(\kappa b) -J_{m-1}(\kappa b) s_{n,m}^{(2)}(\kappa b) \right] \label{n6e23}
\end{align}
where $\Gamma(t)$ is the gamma function, ${_1}F_2(n,m;\kappa;t)$ is the Gaussian hypergeometric function, $s_{n,m}^{(2)}(t)$ denotes the Lommel function of the second kind and $\kappa b>0$ in (\ref{n6e23}). The Lommel function in (\ref{n6e23}) has an asymptotic expansion for $t$ large enough (see for example in \citep{WATSON1952} pp. 351-352):
\begin{equation}\label{n6e26}
  s_{n,m}^{(2)}(t)=t^{n-1}\left[1-\frac{(n-1)^2-m^2}{t^2}+\frac{\left[(n-1)^2-m^2\right] \left[(n-3)^2-m^2\right]}{t^4}-\ldots \right].
\end{equation}
The formulas (\ref{n6e22}) and (\ref{n6e23}) have been used in approximating highly oscillatory Bessel transforms or solving numerically the Volterra integral equations of the second kind with highly oscillatory Bessel functions \citep{XIANG2011,XIANG2013,FANG2015,CHEN2015}. Specially, two complete numerical schemes were proposed in \citep{XIANG2011,XIANG2013} for the evaluation of $I_1(n,m,\kappa,b)$. Both of two schemes adopted the formulas (\ref{n6e22}) and (\ref{n6e26}) with a little difference in the choose of the parameters. We state the latter one in \citep{XIANG2013}.
When $|\kappa b|<50$, the integral $I_1(n,m,\kappa,b)$ is approximated by the first 60 truncated terms of (\ref{n6e22}), otherwise, it is estimated by the formula (\ref{n6e23}) with the Lommel function approximated by the first 10 truncated terms of (\ref{n6e26}). However, this scheme perform less efficiently for small $n$ and $m$ when $|\kappa b|<50$ since 60 values of Bessel functions need to be evaluated and it may not work well for the case $\kappa b<-50$ since the formula (\ref{n6e23}) is only true for $\kappa b>0$.  This motivate us to find another way to evaluate the corresponding moments fast and accurately.

The second moment, $I_2(n,m,\kappa,b)$, has less applications currently. It also has some explicit formulas in the form of infinite series or in the form of other special functions, (see for example in \citep{LUKE1962} pp. 95).

\quad\quad\quad $I_2(n,m,\kappa,b)$
\begin{align}
     &=\frac{(2\kappa)^m b^{n+m+1}}{\Gamma(1/2)}\sum_{j=0}^\infty \frac{\Gamma(m+j+1/2)(2i\kappa b)^j}{\Gamma(2m+j+1)j!(n+m+j+1)} \label{n6e24}\\
     &=\frac{(\kappa/2)^mb^{n+m+1}}{(n+m+1)\Gamma(m+1)} {_2}F_2(m+1/2,n+m+1;2m+1,n+m+2;2i\kappa b) \label{n6e25}
\end{align}
However, the formulas (\ref{n6e24}) and (\ref{n6e25}) are not suitable in the numerical analysis of $I_2(n,m,\kappa,b)$. It is because that formula (\ref{n6e24}) needs a large number of truncated terms to obtain the accepted accuracy especially for large $\kappa b$ while formula  (\ref{n6e25}) has to analyze the Gaussian hypergeometric function.

Besides, some efforts have been drawn on the study of the Levin method \citep{LEVIN1996,XIANG2008a}, the Levin-type method \citep{OLVER2006} and the generalized quadrature rule \citep{EVANS2000,XIANG2008b} in evaluating the integrations containing Bessel functions.
When $\kappa$ is large, the moments $I_1(n,m,\kappa,b)$ and $I_2(n,m,\kappa,b)$ are highly oscillatory integrals and may be evaluated numerically by these methods for some cases. However, there is a disadvantage that these approaches only work for the case $0\not \in [a,b]$ since they need use the differential relations of Bessel functions. Hence, these methods may fail or give rise to large errors when $a$ or $b$ is close to or equals the origin.

The main idea in calculating the moments $I_1(n,m,\kappa,b)$ and $I_2(n,m,\kappa,b)$ is to deduce stable recursive iterations with respect to large $\kappa$ to transform these moments to the integrals which have the explicit formulas or can be analyzed by numerical methods fast and accurately.  For $I_1(n,m,\kappa,b)$, a well-known result is that half of them can be given explicit in finite  terms and the other half depends on the integral $I_1(0,0,\kappa,b)$. To evaluate $I_1(n,m,\kappa,b)$, we present an efficient scheme in evaluation of $I_1(0,0,\kappa,b)$ by combining the trapezoidal rule and the numerical steepest methods. We discover some useful and stable recursive relations and then present explicit formulas for different cases of $I_1(n,m,\kappa,b)$. For $I_2(n,m,\kappa,b)$, it is found that they all can be analyzed explicitly in finite terms. By finding some useful recursive iterations, we present the explicit formulas for different cases of them.

This paper is organized as follows. In Section 2 the evaluation is presented for the moments $I_1(n,m,\kappa,b)$. Thereafter, the moments $I_2(n,m,\kappa,b)$ is discussed in Section 3. We then present in Section 4 several numerical results to validate the efficiency of the proposed method for $I_1(n,m,\kappa,b)$ in Section 2  and then by comparing with the existing methods, a better scheme for $I_1(n,m,\kappa,b)$ is presented.

\section{Evaluation of $I_1(n,m,\kappa,b)$}
In this section, we present the evaluation of $I_1(n,m,\kappa,b)$. It is well-known in \citep{ANDREWS1985} that $I_1(n,m,\kappa,b)$ can be integrated in closed form when $m+n$ is odd but ultimately depends upon the integral $I_1(0,0,\kappa,b)$ which cannot be evaluated in closed form when $m+n$ is even. We first give an efficient numerical scheme to evaluate $I_1(0,0,\kappa,b)$ and then derive some necessary recursive relationships of $I_1(n,m,\kappa,b)$ for numerical purpose.
\subsection{Evaluation of $I_1(0,0,\kappa,b)$}
We present in this subsection an efficient scheme in evaluation of $I_1(0,0,\kappa,b)$ reaching the machine tolerance. The integral is analyzed numerical by the combination of trapezoidal rule and the numerical steepest method and its corresponding error analysis is presented. The integral $I_1(0,0,\kappa,b)$ is denoted as $I_1(\kappa,b)$ for notation simplicity in this subsection.

Noting that $J_0(t)=\frac{1}{2\pi}\int_{-\pi}^\pi e^{-i t\sin \phi}d \phi$, we substitute $J_0(\kappa t)$ in the integral $I_1(\kappa,b)$ with  the integral expression and $I_1(\kappa,b)$  is reformed by exchanging the integral orders as
\begin{equation*}
  I_1(\kappa,b)=\frac{1}{2\pi}\int_{-\pi}^{\pi}\frac{e^{-i\kappa b \sin \phi}-1}{-i\kappa \sin\phi} d\phi.
\end{equation*}
With some calculation, the above integral expression can be simplified as
\begin{equation}\label{n6e1}
  I_1(\kappa,b)=\frac{1}{2\pi}\int_0^{\pi} \frac{e^{i\kappa b\sin\phi}-e^{-i\kappa b\sin\phi}}{i\kappa \sin\phi}d\phi
\end{equation}
or
\begin{equation}\label{n6e2}
  I_1(\kappa,b) = \frac{1}{\pi}\int_0^{\pi/2} \frac{e^{i\kappa b\sin\phi}-e^{-i\kappa b\sin\phi}}{i\kappa \sin\phi}d\phi.
\end{equation}
The equations (\ref{n6e1}) and (\ref{n6e2}) are the two main formulas to develop the efficient numerical method for the evaluation of $I_1(\kappa,b)$.
Since the integrand in (\ref{n6e1}) can be written as the convergent power series, it clear that it is analytic and $\pi$-periodic. Therefore it is efficient to evaluate the integral by the trapezoidal integral rule when $\kappa b$ is not large. We next present the error analysis for the trapezoidal integral rule in evaluating the integral in (\ref{n6e1}) and will derive the dependence of the error on the parameters. To this end, we recall a well-known result on the trapezoidal integral rule \citep{KRESS1998}.
\begin{lemma}\label{n6l1}
Let  $f:\aR\rightarrow\aR$ be analytic and $2\pi$-periodic. Then there exists a strip $D=\aR\times(-a,a)\subset \aC$ with $a>0$ such that $f$ can be extended to a holomorphic and $2\pi$-periodic bounded function $f: D\rightarrow \aC$. The error for the rectangular rule with $N$ points can be estimated by
\[
|E_N|\leq \frac{4\pi M}{e^{Na}-1},
\]
where $M$ denotes a bound for the holomorphic function $f$ on $D$.
\end{lemma}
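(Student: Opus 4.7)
The plan is to reduce everything to Fourier analysis: since $f$ is analytic and $2\pi$-periodic, I would expand it as $f(x)=\sum_{k\in\aZ} c_k e^{ikx}$ and exploit the clean aliasing pattern of the trapezoidal rule on the exponentials, so that the error collapses into a tail sum of Fourier coefficients. The first substantive step is to establish the exponential decay $|c_k|\le M e^{-|k|a'}$ for every $a'<a$, which then passes to the limit $a'\to a$. This follows by applying Cauchy's theorem to shift the contour in $c_k=\frac{1}{2\pi}\int_0^{2\pi} f(x)e^{-ikx}\,dx$ downward by $a'$ when $k>0$ and upward when $k<0$; the two vertical segments at $x=0$ and $x=2\pi$ cancel by $2\pi$-periodicity of $f(x)e^{-ikx}$, leaving a horizontal integral whose modulus is bounded immediately by $Me^{-|k|a'}$.

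Next I would analyze the rectangular (trapezoidal) rule applied to a single exponential. By orthogonality of the $N$-th roots of unity, the discrete sum $\frac{2\pi}{N}\sum_{j=0}^{N-1} e^{ik\cdot 2\pi j/N}$ equals $2\pi$ when $N$ divides $k$ and vanishes otherwise, while the exact integral $\int_0^{2\pi}e^{ikx}\,dx$ equals $2\pi$ only when $k=0$. Interchanging the (absolutely and uniformly convergent) Fourier series with the finite sum and with the integral, the error therefore telescopes into the aliased modes:
\[
E_N \;=\; 2\pi\sum_{j\ne 0} c_{jN}.
\]
Plugging in the coefficient bound and summing the resulting geometric series gives
\[
|E_N|\;\le\; 2\pi M\sum_{j\ne 0} e^{-|j|Na}\;=\;\frac{4\pi M\, e^{-Na}}{1-e^{-Na}}\;=\;\frac{4\pi M}{e^{Na}-1},
\]
which is exactly the claimed estimate.

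The only genuinely delicate point is the contour-shift argument for the Fourier coefficient decay, which depends essentially on $2\pi$-periodicity so that the vertical boundary contributions cancel; everything else is mechanical discrete orthogonality and a geometric-series bound. I expect the periodic contour shift to be the main obstacle, after which the rest of the proof follows immediately.
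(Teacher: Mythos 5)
Your proof is correct: the Fourier-coefficient decay via a periodic contour shift, the aliasing identity $E_N = 2\pi\sum_{j\neq 0}c_{jN}$, and the geometric-series bound together give exactly $|E_N|\le 4\pi M/(e^{Na}-1)$. The paper states this lemma without proof, citing Kress's \emph{Numerical Analysis}, and your argument is essentially the standard one given there, so there is nothing to reconcile.
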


Note that when the integrand is periodic, the trapezoidal integral rule is the same as the rectangular rule.

\begin{proposition}
  If $I_1(\kappa,b)$ in (\ref{n6e1}) is evaluated by using the trapezoidal integral rule with $N$ points, then for any given $c>0$ the error for $I_1(\kappa,b)$ can be bounded by
  \[
    |E_N|\leq \frac{8\pi}{1- e^{-Nc}}e^{\kappa b C-Nc},
  \]
  where $C=\frac{e^{-c/2}+e^{c/2}}{2}$.
\end{proposition}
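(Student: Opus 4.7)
The plan is to verify the hypotheses of Lemma \ref{n6l1} by exhibiting the integrand as an entire, $2\pi$-periodic function after a mild rescaling, then to bound it on a horizontal strip and to apply the lemma directly.

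\textbf{Step 1: analyticity and periodicity of the integrand.} Let
\[
f(\phi) := \frac{e^{i\kappa b\sin\phi}-e^{-i\kappa b\sin\phi}}{i\kappa\sin\phi} = \frac{2\sin(\kappa b\sin\phi)}{\kappa\sin\phi}
= 2b\sum_{j=0}^\infty\frac{(-1)^j(\kappa b\sin\phi)^{2j}}{(2j+1)!}.
\]
The last series shows that the apparent singularity of $f$ at $\sin\phi=0$ is removable and that $f$ extends to an entire function of $\phi$. Since $\sin(\phi+\pi)=-\sin\phi$, one has $f(\phi+\pi)=f(\phi)$, so $f$ is $\pi$-periodic and, in particular, $2\pi$-periodic.

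\textbf{Step 2: rescaling so that Lemma \ref{n6l1} applies.} Lemma \ref{n6l1} is phrased for $2\pi$-periodic functions, so I introduce $\psi=2\phi$ and set $\tilde f(\psi):=f(\psi/2)$, which is entire and $2\pi$-periodic. The trapezoidal rule for $I_1(\kappa,b)=\frac{1}{2\pi}\int_0^\pi f(\phi)\,d\phi$ with $N$ nodes is identified, via this change of variable, with a trapezoidal rule for $\tilde f$ on $[0,2\pi]$, and the multiplicative constants incurred by the $\frac{1}{2\pi}$ prefactor and the Jacobian of $\psi=2\phi$ will be tracked in the final step.

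\textbf{Step 3: bound on the strip.} For $\psi=x+iy$ with $|y|\le c$, the identity
\[
|\sin(\psi/2)|^2 = \sin^2(x/2)+\sinh^2(y/2) \le 1+\sinh^2(c/2)=\cosh^2(c/2)
\]
gives $|\sin(\psi/2)|\le \cosh(c/2)=C$. Combining the power series bound from Step 1 with the elementary inequality $\sinh t/t\le \cosh t\le e^t$ for $t\ge 0$, and using $|b|\le 1$, I obtain
\[
|\tilde f(\psi)| \le 2|b|\,\frac{\sinh(|\kappa b\sin(\psi/2)|)}{|\kappa b\sin(\psi/2)|} \le 2 e^{\kappa b C}
\qquad\text{on } D=\mathbb{R}\times(-c,c).
\]

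\textbf{Step 4: invoke Lemma \ref{n6l1} and collect constants.} Applying Lemma \ref{n6l1} to $\tilde f$ with $a=c$ and $M=2e^{\kappa b C}$, then restoring the $\frac{1}{2\pi}$ normalization and the Jacobian factor from the substitution $\psi=2\phi$, produces
\[
|E_N| \le \frac{8\pi\,e^{\kappa b C}}{e^{Nc}-1} = \frac{8\pi}{1-e^{-Nc}}\,e^{\kappa b C-Nc},
\]
which is the stated bound. The principal subtlety lies in Step 3: the fact that the growth factor involves $C=\cosh(c/2)$ rather than $\cosh(c)$ is what forces the doubling $\psi=2\phi$, because a strip of width $c$ in $\psi$ corresponds to a strip of width $c/2$ in the argument of $\sin$; without this rescaling, the bound would be unnecessarily crude and would not match the form stated in the proposition.
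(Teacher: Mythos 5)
Your argument is correct and is essentially the paper's own proof: both show the integrand is entire and $\pi$-periodic via its power series, rescale by $\psi=2\phi$ to apply Lemma \ref{n6l1}, bound $|\sin(\psi/2)|$ by $\cosh(c/2)=C$ on the strip of half-width $c$, deduce $M=2e^{\kappa bC}$, and read off the bound $4\pi M/(e^{Nc}-1)$. The only (harmless) quibble is in your final bookkeeping sentence: actually dividing by the $\tfrac{1}{2\pi}$ normalization and the Jacobian $\tfrac12$ would yield the sharper constant $2$ in place of $8\pi$, so the stated bound follows a fortiori — exactly as in the paper, which likewise does not carry those prefactors through.
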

\begin{proof}
The proof is the direct application of Lemma \ref{n6l1}. We only need derive the bound for the integrand, denoted by $f$,  in (\ref{n6e1}). Since $f$ is $\pi$-periodic, we make a change of variable $\phi=2\phi$ such that $f$ is $2\pi$-periodic and obtain a series expression for $f$,
\[
f(\phi)=2\sum_{j=0}^\infty \frac{(i\kappa b)^{2j+1}}{(2j+1)!}\sin^{2n}\frac{\phi}{2}.
\]
Suppose that $c$ is a given positive number. For $\phi\in\aC$ and Imag$(\phi)\leq c$, we have the inequality that
\[
|\sin \phi|\leq \frac{e^{-c}+e^{c}}{2}
\]
Let $C:=\frac{e^{-c/2}+e^{c/2}}{2}$ and thus $C\geq 1$. We shall get the bound for $f$ when $\phi\in\aC$ and Imag$(\phi)\leq c$,
\[
|f|\leq 2\sum_{j=0}^\infty \frac{(\kappa b)^{2j+1}}{(2j+1)!}C^{2n}\leq 2\sum_{j=0}^\infty \frac{(\kappa bC)^{2j+1}}{(2j+1)!} \leq 2e^{\kappa bC}
\]
Hence the error bound follows directly with the help of Lemma \ref{n6l1}.
\end{proof}

We take the approximate value of $c$ which minimized $C/c$ and get an approximate error bound for $I_1(\kappa,b)$ which is given by $8\pi e^{2.4(0.75\kappa b-N)}$. It is obvious that the number $N$ shall increase linearly as $\kappa b$ to attain the accuracy. It cost much in computation by the trapezoidal integral rule and, therefore, we must apply other methods while $\kappa b$ is very large.

 Next, we adopt the numerical steepest method \citep{HUYBRECHES2006} to evaluate $I_1(\kappa,b)$ through (\ref{n6e2}). Without the loss of generality, we assume that $\kappa b\geq 0$, otherwise we may consider $I_1(\kappa,-b)$ which equals $-I_1(\kappa,b)$ when $b<0$ or $I_1(-\kappa,b)$ which equals $I_1(\kappa,b)$ when $\kappa<0$ according to equation (\ref{n6e2}). We need use the numerical steepest method for each exponential function in the integrand since there is no route in complex plane such that two conjugate functions decays exponentially at the same time. We also note that each integral is a divergent improper integral when each exponential function is handled separately. However, it will not be a trouble since we use the numerical steepest method to each part in form and the divergence in each part will cancel out when adding two integral together again. We then handle one of them, $I_{1,1}(\kappa,b):=\frac{1}{\pi}\int_0^{\pi/2} \frac{e^{i\kappa b\sin\phi}}{i\kappa \sin\phi}d\phi$, and the other one can be done similarly. The key ideal of the numerical steepest method is to choose the proper integration routes and then to use the Cauchy integral theorem. According to the instruction of the numerical steepest method, the routes on which the integrand in $I_{1,1}(\kappa,b)$ is non-oscillatory and decays exponentially can be chosen as
 $h_{0}(p)=\arcsin (ip)$ and $h_{\pi/2}(p)=\arcsin(1+ip)$ where $p\geq0$. To use the Cauchy integral theorem, we must scoop out the origin point since the integrand in $I_{1,1}(\kappa,b)$ is singular there. Therefore, we introduce a one-quarter circle route $\Gamma$ whose radius is $\varepsilon>0$ around the origin and the domain for $h_0$ is $p\geq \varepsilon$. The final integration routes are illustrated in Figure \ref{n6f1}.
\begin{figure}
  \centering
  \includegraphics[width=3in]{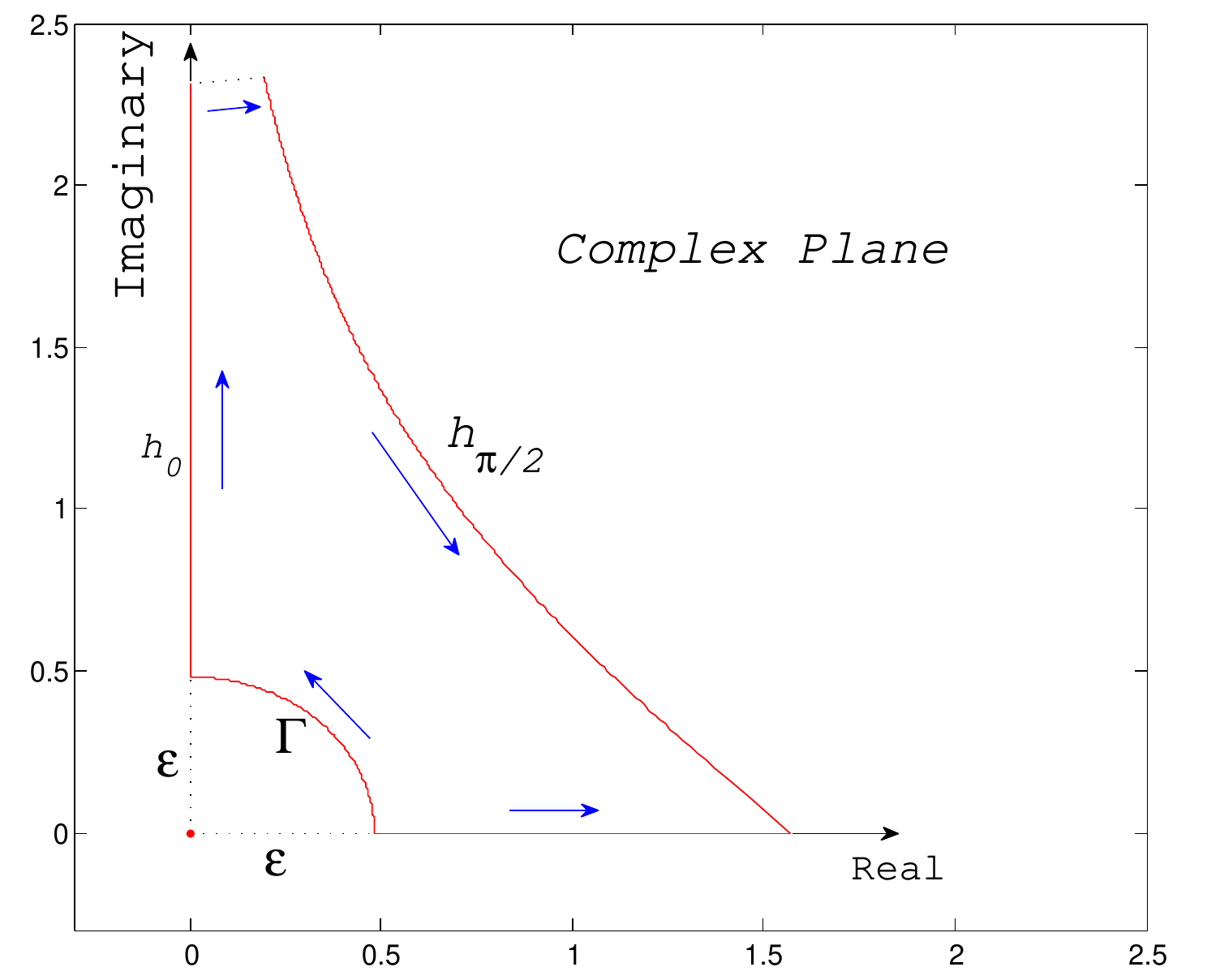}\\
  \caption{Illustration of the integration routes for $I_{1,1}(\kappa, b)$}\label{n6f1}
\end{figure}
Then by using the Cauchy integral theorem, we have with some calculation that
\[
\begin{split}
I_{1,1}(\kappa,b)& =\lim_{\varepsilon\rightarrow 0}\frac{1}{\pi}\left(\int_\Gamma+\int_{h_0}+\int_{h_{\pi/2}} \right) \frac{e^{i\kappa b\sin\phi}}{i\kappa \sin\phi}d\phi \\
&=\frac{1}{2\kappa}+\frac{1}{\kappa\pi}\lim_{\varepsilon\rightarrow 0}\int_\varepsilon^{+\infty} \frac{e^{-\kappa bp}}{ip\sqrt{1+p^2}}dp-\frac{1}{\kappa\pi} \int_0^{+\infty} \frac{e^{i\kappa b}e^{-\kappa bp}}{(1+ip)\sqrt{p(p-2i)}}dp.
\end{split}
\]
The first integral in the equation above is carried out by making a polar transformation while the other two are handled normally by making change of variables. It can be seen that the second integral is divergent which will be canceled out by the corresponding part in the $I_{1,2}(\kappa,b):=\frac{1}{\pi}\int_0^{\pi/2} \frac{e^{-i\kappa b\sin\phi}}{i\kappa \sin\phi}d\phi$. Undergoing the same way as $I_{1,1}(\kappa,b)$, we present directly the formula for $I_{1,2}(\kappa,b)$,
 \[
I_{1,2}(\kappa,b)=-\frac{1}{2\kappa}+\frac{1}{\kappa\pi}\lim_{\varepsilon\rightarrow 0}\int_\varepsilon^{+\infty} \frac{e^{-\kappa bp}}{ip\sqrt{1+p^2}}dp+\frac{1}{\kappa\pi} \int_0^{+\infty} \frac{e^{-i\kappa b}e^{-\kappa bp}}{(1-ip)\sqrt{p(p+2i)}}dp.
\]
Combining the equations for $I_{1,1}(\kappa,b)$ and $I_{1,2}(\kappa,b)$ together and then making a change of variable $p=\kappa bp$, we get that
\begin{equation}\label{n6e3}
\begin{split}
I_{1}(\kappa,b)&=\frac{1}{\kappa}-\frac{1}{\kappa\pi}\int_0^{+\infty}\frac{e^{-\kappa bp}}{\sqrt{p}}\left(\frac{e^{i\kappa b}}{(1+ip)\sqrt{p-2i}} +\frac{e^{-i\kappa b}}{(1-ip)\sqrt{p+2i}} \right)dp \\
&=\frac{1}{\kappa}-\frac{2b}{\pi}\int_0^{+\infty}p^{-1/2}e^{-p} Re\left(e^{i\kappa b}(\kappa b+ip)^{-1}(p-2i\kappa b)^{-1/2}\right) dp
\end{split}
\end{equation}
The numerical steepest method in evaluating $I_{1}(\kappa,b)$ is the formula in (\ref{n6e3}) in which the improper integral is calculated numerically by the generalized Gauss-Laguerre quadrature. We next present the error bound for the numerical steepest method in evaluating $I_{1}(\kappa,b)$ and show the dependence on the parameters. To this end, we recall the famous generalized Gauss-Laguerre formula \citep{DAVIS1984}.
If $f$ is $2N$-times continuously differentiable and $\alpha>-1$, then
\begin{equation}\label{n6e4}
  \int_0^\infty t^\alpha e^{-t}f(t)dt=\sum_{j=1}^N\omega_jf(t_j)+\frac{N!\Gamma(N+\alpha+1)}{(2N)!}f^{(2N)}(\xi),\ 0<\xi<\infty,
\end{equation}
where the abscissas $t_j$ are the zeros of the generalized Laguerre polynomial $L_N^{(\alpha)}(t)$ and the weights
\[
\omega_j=\frac{\Gamma(N+\alpha+1)t_{j}}{N!\left[L_{N+1}^{(\alpha)}(t_j) \right]^2}.
\]

\begin{proposition}
  If $I_1(\kappa,b)$ in (\ref{n6e3}) is evaluated by using the generalized Gauss-Laguerre quadrature with $N$ points, then the error for $I_1(\kappa,b)$ can be bounded by
  \[
    |E_N|\leq \frac{2^{3/2}b}{\pi}N!\Gamma(N+1/2)(\kappa b)^{-2N-3/2},\ N\in\aN.
  \]
\end{proposition}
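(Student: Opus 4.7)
The plan is to apply the generalized Gauss--Laguerre error formula (\ref{n6e4}) with $\alpha=-1/2$ directly to the improper integral in (\ref{n6e3}). Writing
\[
g(p):=e^{i\kappa b}(\kappa b+ip)^{-1}(p-2i\kappa b)^{-1/2},\qquad f(p):=\mathrm{Re}\,g(p),
\]
so that the integral in (\ref{n6e3}) has the form $\int_0^\infty p^{-1/2}e^{-p}f(p)\,dp$, the remainder in (\ref{n6e4}) immediately gives
\[
|E_N|\;\le\;\frac{2b}{\pi}\cdot\frac{N!\,\Gamma(N+1/2)}{(2N)!}\,|f^{(2N)}(\xi)|\qquad\text{for some }\xi\in(0,\infty).
\]
Hence the whole task reduces to proving the pointwise bound $|f^{(2N)}(\xi)|\le \sqrt{2}\,(2N)!\,(\kappa b)^{-2N-3/2}$ on $(0,\infty)$.

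Since $f=\mathrm{Re}\,g$, I would use $|f^{(2N)}|\le|g^{(2N)}|$ and then expand $g^{(2N)}$ via Leibniz's rule, using the closed-form derivatives
\[
\frac{d^{k}}{dp^{k}}(\kappa b+ip)^{-1}=k!\,(-i)^{k}(\kappa b+ip)^{-k-1},\qquad
\frac{d^{k}}{dp^{k}}(p-2i\kappa b)^{-1/2}=(-1)^{k}\,\frac{(2k)!}{4^{k}k!}\,(p-2i\kappa b)^{-k-1/2}.
\]
For any $p>0$ one has $|\kappa b+ip|\ge\kappa b$ and $|p-2i\kappa b|\ge 2\kappa b$, which gives the term-by-term estimate
\[
|g^{(2N)}(\xi)|\;\le\;(\kappa b)^{-2N-3/2}\,2^{-1/2}(2N)!\sum_{j=0}^{2N}\binom{2j}{j}\frac{1}{8^{j}},
\]
after setting $j=2N-k$ and simplifying the combinatorial prefactor to $\binom{2j}{j}$.

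The final step is to bound the finite sum by a geometric series: using the crude estimate $\binom{2j}{j}\le 4^{j}$ yields $\sum_{j=0}^{2N}\binom{2j}{j}8^{-j}\le \sum_{j\ge0}2^{-j}=2$, whence $|g^{(2N)}(\xi)|\le\sqrt{2}\,(2N)!\,(\kappa b)^{-2N-3/2}$. Substituting this back into the expression for $|E_N|$ cancels the $(2N)!$ and produces exactly the stated bound $\frac{2^{3/2}b}{\pi}N!\,\Gamma(N+1/2)(\kappa b)^{-2N-3/2}$. The main obstacle is the bookkeeping in the Leibniz expansion --- keeping track of the factorials from both $h_1^{(k)}$ and $h_2^{(2N-k)}$ and recognizing that after the index change the coefficients collapse to the central binomial coefficients, so that a clean geometric bound becomes available.
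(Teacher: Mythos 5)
Your proof is correct and follows essentially the same route as the paper's: apply the generalized Gauss--Laguerre remainder (\ref{n6e4}) with $\alpha=-1/2$, expand the $2N$-th derivative by Leibniz's rule, bound the two factors using $|\kappa b+ip|\ge\kappa b$ and $|p-2i\kappa b|\ge 2\kappa b$, and control the resulting combinatorial sum by a geometric series bounded by $2$. Your bookkeeping is in fact slightly cleaner than the paper's, whose displayed intermediate inequalities contain typographical slips (a dropped $j!$ and $(2j-1)!$ where the double factorial $(2j-1)!!$ is meant), yet both arrive at the same final estimate.
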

\begin{proof}
Let $\alpha=-1/2$ and
\[
f(t)=2b Re\left(e^{i\kappa b}(\kappa b+it)^{-1}(t-2i\kappa b)^{-1/2}\right)/\pi.
\]
It is known that $f$ is infinitely differentiable on $[0,\infty)$. By using the Leibniz rule for the higher derivatives of a product of two factors, the derivative of order $2N$ of $f$ is given by
\[
f^{(2N)}(t)=\frac{2b}{\pi} Re\left(e^{i\kappa b}\sum_{j=0}^{2N}C_{2N}^j(-i)^jj!(\kappa b+it)^{-(j+1)} (-2)^{-(2N-j)}(4N-2j-1)!!(t-2i\kappa b)^{-(2N-j)-1/2}\right).
\]
We admit that $(-1)!!=1$ in the above equation. Then we have that for $t\geq0$,
\[
\begin{split}
|f^{(2N)}(t)| & \leq \frac{2b}{\pi} \sum_{j=0}^{2N}C_{2N}^j(\kappa b)^{-(j+1)} 2^{-(2N-j)}(4N-2j-1)!!(2\kappa b)^{-(2N-j)-1/2}\\
& \leq (\kappa b)^{-2N-3/2}(2N)!\frac{\sqrt{2}b}{\pi}\sum_{j=0}^{2N}\frac{(2j-1)!}{j!4^j} \\
& \leq \frac{2^{3/2}b}{\pi}(2N)!(\kappa b)^{-2N-3/2}
\end{split}
\]
With the bound of $f^{(2N)}$, the desired error bound follows directly from (\ref{n6e4}).
\end{proof}

We note that the error bound for the numerical steepest method with $N$-point generalized Gauss-Laguerre quadrature decrease as $N$ increase when $N\leq [\kappa b]$. However, it is better to use relatively small value of $N$ compared to $\kappa b$ since the weights can be extremely small and we may hardly obtain them with required accuracy.

We may find out an efficient scheme in evaluating $I_1(\kappa, b)$ with at least a machine tolerance for all $\kappa b$ with the help of the error bounds. For this purpose, we present a figure about the relation between $N$ and $\kappa b$, shown in Figure \ref{n6f2}, when the error bound is under the machine tolerance. For the generalized Guass Laguerre quadrature, we assume here that $b\leq 1$. According to the comparison in Figure \ref{n6f2}, we present a scheme for all $\kappa b$ to calculate $I_1(\kappa, b)$: when $\kappa b<24$, we adopt the trapezoidal integral rule with 36 points, otherwise, we choose the numerical steepest method with the 10-point generalized Gauss Laguerre quadrature.
\begin{figure}
  \centering
  \includegraphics[width=3.8in]{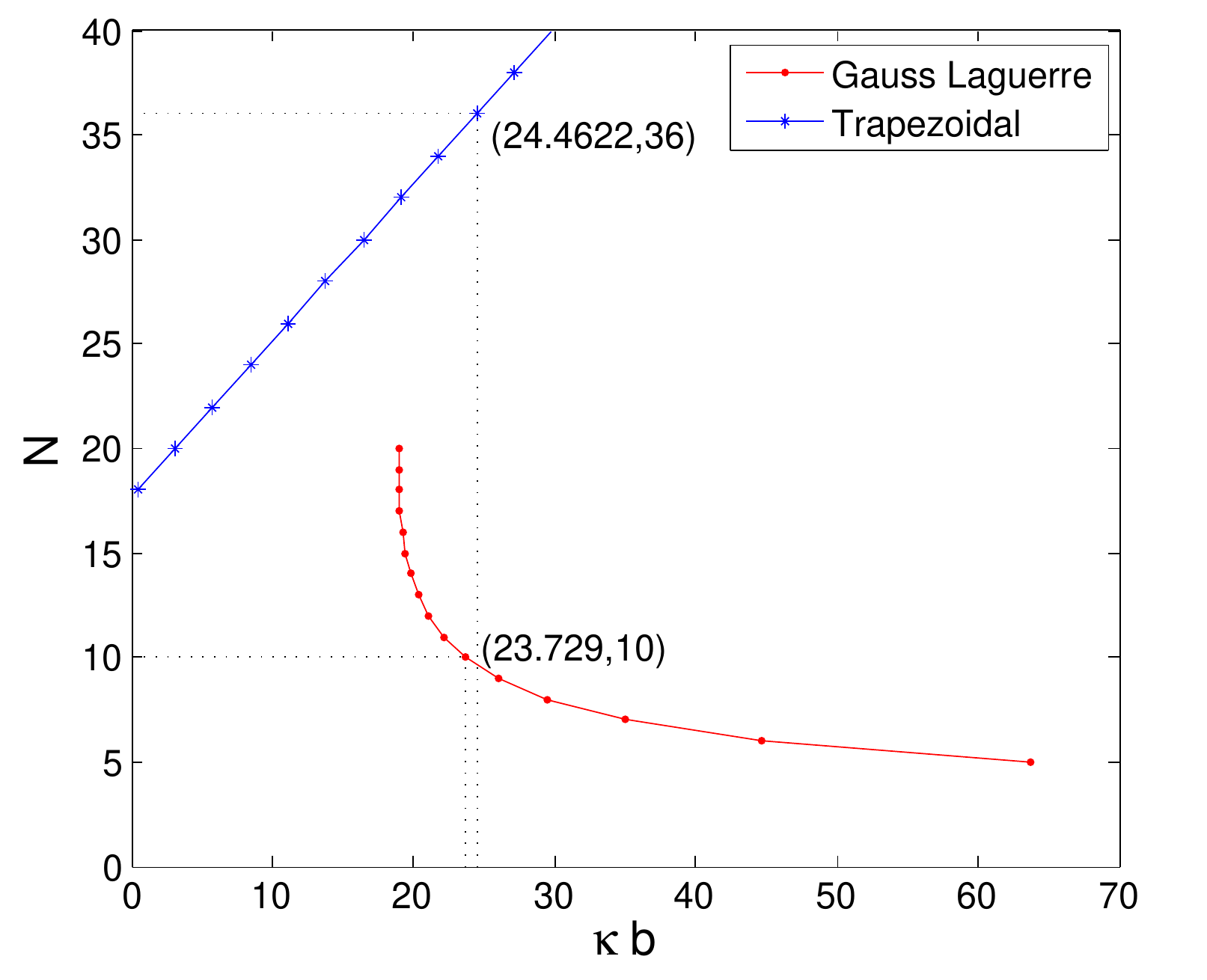}\\
  \caption{Relation between $N$ and $\kappa b$ when the error bound is under the machine tolerance.}\label{n6f2}
\end{figure}

\subsection{Evaluation of $I_{1}(n,m,\kappa,b)$}
In this subsection, we shall present the complete recursive relations of the integrals $I_{1}(n,m,\kappa,b)$. We denote the corresponding indefinite integrals by $I_1(n,m,\kappa,t):=\int t^nJ_{m}(\kappa t)dt$.

For this purpose, we recall some well-known results about the differentiation relations of Bessel function of the first kind \citep{ANDREWS1985} which are listed below
\begin{eqnarray}
  J_m'(t) &=& -\frac{m}{t}J_m(t)+J_{m-1}(t)  \\
  J_m'(t) &=& -J_{m+1}(t)+\frac{m}{t}J_m(t) \\
  \frac{d}{dt}\left[t^mJ_m(t)\right] &=& t^mJ_{m-1}(t) \\
  \frac{d}{dt}\left[t^{-m}J_m(t)\right] &=& -t^{-m}J_{m+1}(t)
\end{eqnarray}
Denote $J_{m,\kappa}(t):=J_m(\kappa t)$. By making a change of variables, we easily derive the corresponding results for $J_{m,\kappa}(t)$ which shall be used in the later deduction.
\begin{eqnarray}
  J_{m,\kappa}'(t) &=& -\frac{m}{t}J_{m,\kappa}(t)+\kappa J_{m-1,\kappa}(t) \label{n6e9}\\
  J_{m,\kappa}'(t) &=& -\kappa J_{m+1,\kappa}(t)+\frac{m}{t}J_{m,\kappa}(t) \label{n6e10}\\
  \frac{d}{dt}\left[t^mJ_{m,\kappa}(t)\right] &=& \kappa t^mJ_{m-1,\kappa}(t) \label{n6e5}\\
  \frac{d}{dt}\left[t^{-m}J_{m,\kappa}(t)\right] &=& -\kappa t^{-m}J_{m+1,\kappa}(t) \label{n6e6}
\end{eqnarray}
Specially, we have directly from (\ref{n6e5}) and (\ref{n6e6}) that
\begin{equation}
I_1(m+1,m,\kappa,t)=\frac{1}{\kappa}t^{m+1}J_{m+1,\kappa}(t)
\end{equation}
and
\begin{equation}
I_1(0,1,\kappa,t)=-\frac{1}{\kappa}J_{0,\kappa}(t).
\end{equation}

We next derive some basic recursive relations for $I_1(n,m,\kappa,t)$.
\begin{lemma} For $n,m\in \aZ$ and $|\kappa|>0$
\begin{eqnarray}
  I_{1}(n,m,\kappa,t) &=& \frac{1}{n-m+1}t^{n+1}J_{m,\kappa}(t)-\frac{\kappa}{n-m+1}I_{1}(n+1,m-1,\kappa,t),\label{n6e8} \\
  I_{1}(n,m,\kappa,t) &=& \frac{1}{n+m+1}t^{n+1}J_{m,\kappa}(t)+\frac{\kappa}{n+m+1}I_{1}(n+1,m+1,\kappa,t), \label{n6e7}\\
  I_{1}(n,m,\kappa,t) &=& \frac{1}{\kappa}t^nJ_{m+1,\kappa}(t)-\frac{n-m-1}{\kappa}I_{1}(n-1,m+1,\kappa,t), \label{n6e12}\\
  I_{1}(n,m,\kappa,t) &=& \frac{-1}{\kappa}t^{n}J_{m-1,\kappa}(t)+\frac{n+m-1}{\kappa}I_{1}(n-1,m-1,\kappa,t), \label{n6e11}
\end{eqnarray}
where $n\neq m-1$ in (\ref{n6e8}) and $n\neq -m-1$ in (\ref{n6e7}).
\end{lemma}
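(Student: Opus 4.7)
The plan is to obtain each of the four identities by differentiating an explicit product of the form $t^{k}J_{j,\kappa}(t)$, expanding the derivative with one of the formulas \eqref{n6e9}--\eqref{n6e6}, and then integrating both sides. This is equivalent to integration by parts but organised so that each relation drops out in a single step.

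For the first two identities, the natural object to differentiate is $t^{n+1}J_{m,\kappa}(t)$. Applying the product rule and then substituting \eqref{n6e9} for $J_{m,\kappa}'(t)$ yields
\[
\frac{d}{dt}\bigl[t^{n+1}J_{m,\kappa}(t)\bigr]=(n-m+1)t^{n}J_{m,\kappa}(t)+\kappa t^{n+1}J_{m-1,\kappa}(t),
\]
because the $-\tfrac{m}{t}J_{m,\kappa}(t)$ contribution combines cleanly with $(n+1)t^{n}J_{m,\kappa}(t)$. Integrating gives \eqref{n6e8} after dividing by $n-m+1$, which is exactly the excluded case. Substituting \eqref{n6e10} instead produces the coefficient $n+m+1$ in front of $t^{n}J_{m,\kappa}(t)$ and a $-\kappa t^{n+1}J_{m+1,\kappa}(t)$ term; integrating and rearranging yields \eqref{n6e7}, with the excluded case $n\neq -m-1$ ensuring non-zero denominator.

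For the last two identities, I would instead differentiate $t^{n}J_{m\pm1,\kappa}(t)$ and expand the derivative of the Bessel factor with \eqref{n6e9} or \eqref{n6e10} applied at the shifted order. Differentiating $t^{n}J_{m+1,\kappa}(t)$ and using \eqref{n6e9} at order $m+1$ gives
\[
\frac{d}{dt}\bigl[t^{n}J_{m+1,\kappa}(t)\bigr]=(n-m-1)t^{n-1}J_{m+1,\kappa}(t)+\kappa t^{n}J_{m,\kappa}(t),
\]
and integrating then solving for $I_{1}(n,m,\kappa,t)$ produces \eqref{n6e12}. Symmetrically, differentiating $t^{n}J_{m-1,\kappa}(t)$ and using \eqref{n6e10} at order $m-1$ gives the sign-flipped identity \eqref{n6e11}. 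No division by a possibly vanishing factor is needed here because the leading coefficient of $I_{1}(n,m,\kappa,t)$ on the right-hand side is $\kappa$, which is assumed non-zero.

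There is no real obstacle: the whole proof is bookkeeping of coefficients and cancellations of the $m/t$ terms, and no convergence or boundary issues arise because we work with indefinite integrals. The only item worth flagging explicitly is that the denominators $n-m+1$ and $n+m+1$ in \eqref{n6e8} and \eqref{n6e7} correspond precisely to the reduction cases $I_{1}(m-1,m,\kappa,t)$ and $I_{1}(-m-1,m,\kappa,t)$, which must be treated separately; this matches the hypotheses in the lemma statement.
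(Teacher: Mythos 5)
Your proposal is correct and is essentially the paper's own argument in a slightly repackaged form: the paper performs integration by parts on $\int J_{m,\kappa}(t)\,d\frac{t^{n+1}}{n+1}$ and on $\frac{1}{\kappa}\int t^{n-m-1}\,d\left(t^{m+1}J_{m+1,\kappa}(t)\right)$ (resp.\ its counterpart from \eqref{n6e6}), which is the same computation as your ``differentiate the product, expand with \eqref{n6e9}--\eqref{n6e10}, then integrate.'' All four coefficient checks and the excluded cases $n=m-1$, $n=-m-1$ come out exactly as in the paper.
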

\begin{proof}
 Since
 \[I_{1}(n,m,\kappa,t)=\int  J_{m,\kappa}(t)d\frac{t^{n+1}}{n+1}=\frac{t^{n+1}J_{m,\kappa}(t)}{n+1}-\frac{1}{n+1}\int t^{n+1} J_{m,\kappa}'(t)dt,\]
 we obtain (\ref{n6e8}) and (\ref{n6e7}) by substituting $J_{m,\kappa}'$ with the formula (\ref{n6e9}) and (\ref{n6e10}), respectively.

 According to (\ref{n6e5}) and (\ref{n6e6}), we get that
 \begin{eqnarray*}
    I_1(n,m,\kappa,t) &=& \frac{1}{\kappa}\int t^{n-m-1}d\left(t^{m+1}J_{m+1,\kappa}(t)\right) \\
    I_1(n,m,\kappa,t) &=& -\frac{1}{\kappa}\int t^{n+m-1}d\left(t^{-(m-1)}J_{m-1,\kappa}(t)\right).
 \end{eqnarray*}
 Then the formulas (\ref{n6e12}) and (\ref{n6e11}) can be obtained by the integration by parts from the above two equations, respectively.
\end{proof}

Note that the formula (\ref{n6e12}) has also been derived in \citep{XIANG2013} pp. 249.
We combine some of the recursive relations together to get other recursive relations which are helpful in latter deduction. They are given as follows.
\begin{eqnarray}
  I_{1}(n,m,\kappa,t) &=& \frac{t^{n+1}(J_{m,\kappa}(t)+J_{m-2,\kappa}(t))}{n-m+1}-\frac{n+m-1}{n-m+1}I_{1}(n,m-2,\kappa,t) \label{n6e14} \\
  I_{1}(n,m,\kappa,t) &=& \frac{2(m+2)t^nJ_{m+1,\kappa}(t)}{\kappa}- \frac{n-m-1}{n+m+1}I_{1}(n,m+2,\kappa,t) \label{n6e15} \\
  I_{1}(n,m,\kappa,t) &=& \frac{t^nJ_{m+1,\kappa}(t)}{\kappa}+\frac{(n-m-1) t^{n-1}J_{m,\kappa}(t)}{\kappa^2} -\frac{(n-1)^2-m^2}{\kappa^2}I_{1}(n-2,m,\kappa,t)\ \label{n6e16}
\end{eqnarray}
We note that
is obtained by combining (\ref{n6e14}) by (\ref{n6e8}) and (\ref{n6e11}),
(\ref{n6e15}) by (\ref{n6e7}) and (\ref{n6e12}), and (\ref{n6e16}) by (\ref{n6e12}) and (\ref{n6e11}).
We also point out that the formula (\ref{n6e22}) can be obtained easily by the iteration of equation (\ref{n6e15}).

In the next, we present the explicit expressions for $I_{1}(n,m,\kappa,t)$ with different $n$ and $m$ through the preceding recursive relations. Since the expression are obtained by iteration and can be proved by induction easily, we omit the detailed proof. Let $\aN_0:=\aN\cup \{0\}$.

When $n=m+1+2s$ where $s\in \aN_0$, $I_{1}(n,m,\kappa,t)$ has the closed form and can be obtained through (\ref{n6e16}).
\begin{proposition} For $m, s\in\aN_0$ and $|\kappa|>0$,
\begin{equation}\label{n6e31}
  I_{1}(m+1+2s,m,\kappa,t) = \frac{1}{\kappa}t^{m+1}J_{m+1,\kappa}(t)\sum_{j=0}^s c_{s-j}t^{2j}+ \frac{1}{\kappa^2}t^{m}J_{m,\kappa}(t)\sum_{j=1}^s c_{s-j}2jt^{2j}
\end{equation}
where $c_0=1$ and
\[
c_j=\prod_{k=0}^{j-1}\left(-\frac{4(s-k)(m+s-k)}{\kappa^2}\right),\ j>0.
\]
\end{proposition}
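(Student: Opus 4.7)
The approach is a direct induction on $s$ based on the three-term recursion (\ref{n6e16}). Substituting $n=m+1+2s$ into (\ref{n6e16}) simplifies the prefactors to $n-m-1=2s$ and $(n-1)^2-m^2=(m+2s)^2-m^2=4s(m+s)$, yielding
\[
I_1(m+1+2s,m,\kappa,t)=\frac{t^{m+1+2s}J_{m+1,\kappa}(t)}{\kappa}+\frac{2s\,t^{m+2s}J_{m,\kappa}(t)}{\kappa^2}-\frac{4s(m+s)}{\kappa^2}\,I_1(m+1+2(s-1),m,\kappa,t),
\]
so the level-$s$ expression reduces to the level-$(s-1)$ expression plus two explicit boundary terms. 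The base case $s=0$ is immediate from the identity $I_1(m+1,m,\kappa,t)=\tfrac{1}{\kappa}t^{m+1}J_{m+1,\kappa}(t)$ displayed right after (\ref{n6e6}): in (\ref{n6e31}) the first sum collapses to $c_0t^0=1$ and the second is empty.

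For the induction step, write $c_j^{(s)}$ to make the dependence on $s$ explicit. Peeling off the $k=0$ factor in the product defining $c_j^{(s)}$ and re-indexing $k\mapsto k+1$ gives the shift identity
\[
c_j^{(s)}=-\frac{4s(m+s)}{\kappa^2}\,c_{j-1}^{(s-1)},\qquad j\ge 1.
\]
Multiplying the inductive hypothesis for $s-1$ by $-\tfrac{4s(m+s)}{\kappa^2}$ therefore converts each $c_{s-1-j}^{(s-1)}$ into $c_{s-j}^{(s)}$, transforming the two sums in the hypothesis into $\sum_{j=0}^{s-1}c_{s-j}^{(s)}t^{2j}$ and $\sum_{j=1}^{s-1}c_{s-j}^{(s)}\,2j\,t^{2j}$. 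The two boundary terms coming from the recursion are precisely the missing $j=s$ contributions: using $c_0^{(s)}=1$, the first contributes $\tfrac{1}{\kappa}t^{m+1}J_{m+1,\kappa}(t)\cdot c_0^{(s)}t^{2s}$ and the second contributes $\tfrac{1}{\kappa^2}t^m J_{m,\kappa}(t)\cdot c_0^{(s)}\cdot 2s\cdot t^{2s}$. These extend the upper summation limits from $s-1$ to $s$ and close the induction.

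The main obstacle is purely bookkeeping: verifying the shift identity for $c_j^{(s)}$ and confirming that the re-indexing absorbs both boundary terms cleanly into the two sums. No analytic input is required; once (\ref{n6e16}) and the coefficient shift are in hand, the remainder is symbolic manipulation of finite sums, which is why the author is justified in describing the argument as a straightforward induction.
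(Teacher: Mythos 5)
Your proof is correct and follows exactly the route the paper indicates but omits: iterating the recursion (\ref{n6e16}) with $n=m+1+2s$ and closing the induction via the coefficient shift $c_j^{(s)}=-\tfrac{4s(m+s)}{\kappa^2}c_{j-1}^{(s-1)}$, with base case $I_1(m+1,m,\kappa,t)=\tfrac{1}{\kappa}t^{m+1}J_{m+1,\kappa}(t)$. The bookkeeping of the boundary terms as the missing $j=s$ contributions checks out, so this is a complete version of the argument the authors describe as "obtained through (\ref{n6e16})" and provable "by induction easily".
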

We note that $c_j$ are generic constants and their expressions may change in each appearance.

When $n=m$, $I_{1}(n,m,\kappa,t)$ has no closed form but can be simplified through (\ref{n6e11}) to the case of $I_{1}(0,0,\kappa,t)$ which can be evaluated efficiently.
\begin{proposition} For $m\in\aN_0$ and $|\kappa|>0$,
\begin{equation}
  I_{1}(m,m,\kappa,t) = -\frac{1}{\kappa}\sum_{j=1}^{m}c_{m-j}t^{j}J_{j-1,\kappa}(t)+c_mI_{1}(0,0,\kappa,t)
\end{equation}
where $c_0=1$ and
\[
c_j=\prod_{k=0}^{j-1}\frac{2(m-k)-1}{\kappa},\ j>0.
\]
\end{proposition}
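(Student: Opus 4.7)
The plan is to prove the formula by induction on $m$, using the recursive relation (\ref{n6e11}) as the workhorse. Setting $n=m$ in (\ref{n6e11}) gives
\[
I_{1}(m,m,\kappa,t) \;=\; -\frac{1}{\kappa}\,t^{m}J_{m-1,\kappa}(t) \;+\; \frac{2m-1}{\kappa}\,I_{1}(m-1,m-1,\kappa,t),
\]
so each application of (\ref{n6e11}) shifts both indices down by one simultaneously, which is exactly what makes the reduction terminate at the non-closed-form integral $I_{1}(0,0,\kappa,t)$ after $m$ steps. This observation already tells us the shape of the answer: a sum of boundary terms of the form $t^{j}J_{j-1,\kappa}(t)$ picked up at each step, plus a remaining multiple of $I_{1}(0,0,\kappa,t)$.

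First I would dispose of the base case $m=0$, where the sum is empty and the claim reduces to $I_{1}(0,0,\kappa,t)=I_{1}(0,0,\kappa,t)$. For the inductive step, I assume the formula holds with $m$ replaced by $m-1$, with coefficients $c'_{j}=\prod_{k=0}^{j-1}\frac{2(m-1-k)-1}{\kappa}$, and then substitute the expansion of $I_{1}(m-1,m-1,\kappa,t)$ into the right-hand side of the single-step identity above. This yields
\[
I_{1}(m,m,\kappa,t) = -\frac{1}{\kappa}t^{m}J_{m-1,\kappa}(t) - \frac{2m-1}{\kappa^{2}}\sum_{j=1}^{m-1}c'_{m-1-j}\,t^{j}J_{j-1,\kappa}(t) + \frac{2m-1}{\kappa}c'_{m-1}I_{1}(0,0,\kappa,t).
\]

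The remaining task is bookkeeping of the coefficients. I need to check that $\frac{2m-1}{\kappa}c'_{m-1-j}=c_{m-j}$ for $j=1,\dots,m-1$, that the lone term $-\tfrac{1}{\kappa}t^{m}J_{m-1,\kappa}(t)$ matches the $j=m$ summand (with $c_{0}=1$), and that $\frac{2m-1}{\kappa}c'_{m-1}=c_{m}$. Each of these is a one-line product-shift: re-indexing $l=k+1$ in $\prod_{k=0}^{m-j-2}\frac{2(m-1-k)-1}{\kappa}$ turns it into $\prod_{l=1}^{m-j-1}\frac{2(m-l)-1}{\kappa}$, and absorbing the prefactor $\frac{2m-1}{\kappa}$ as the $l=0$ term gives the product defining $c_{m-j}$. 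The main (and only) obstacle is this index-shift verification; once it is done, assembling the pieces produces exactly the claimed expression, completing the induction. No analytic difficulty arises because (\ref{n6e11}) is an algebraic identity valid for all $|\kappa|>0$.
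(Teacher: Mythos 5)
Your proof is correct and follows exactly the route the paper intends: the paper omits the details, stating only that the formula is obtained by iterating (\ref{n6e11}) with $n=m$ and can be verified by induction, which is precisely your argument. The base case, the single-step reduction, and the coefficient index-shift all check out.
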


When $n=m+2s$ where $s\in \aN_0$, $I_{1}(n,m,\kappa,t)$ can be transformed into the case of $n=m$ with the help of (\ref{n6e16}).
\begin{proposition}
For $m, s\in\aN_0$ and $|\kappa|>0$,
\begin{equation}\label{n6e30}
  I_{1}(m+2s,m,\kappa,t) = \frac{1}{\kappa}t^{m}J_{m+1,\kappa}(t)\sum_{j=1}^{s} c_{s-j}t^{2j}+ \frac{1}{\kappa^2}t^{m-1}J_{m,\kappa}(t)\sum_{j=1}^s c_{s-j}(2j-1)t^{2j}+c_sI_{1}(m,m,\kappa,t)
\end{equation}
where $c_0=1$ and
\[
c_j=\prod_{k=0}^{j-1}\left(-\frac{[2(s-k)-1][2(m+s-k)-1]}{\kappa^2}\right),\ j>0.
\]
\end{proposition}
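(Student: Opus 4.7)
My plan is to prove the formula by induction on $s$, using the recursion (\ref{n6e16}) as the single driving identity. The base case $s=0$ is immediate: both sums $\sum_{j=1}^{0}$ are empty, and the formula reduces to $I_1(m,m,\kappa,t)=c_0 I_1(m,m,\kappa,t)$, which holds since $c_0=1$.

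For the inductive step, I would apply (\ref{n6e16}) to $I_{1}(m+2s,m,\kappa,t)$ with $n=m+2s$. Noting that $(n-1)^2-m^2 = (m+2s-1)^2-m^2 = (2s-1)(2m+2s-1)$, this yields
\[
I_{1}(m+2s,m,\kappa,t) = \frac{t^{m+2s}J_{m+1,\kappa}(t)}{\kappa}+\frac{(2s-1) t^{m+2s-1}J_{m,\kappa}(t)}{\kappa^2} -\frac{(2s-1)(2m+2s-1)}{\kappa^2}I_{1}(m+2s-2,m,\kappa,t).
\]
The last coefficient is exactly the $c_1$ from the proposition (at level $s$). I would then invoke the inductive hypothesis for $s-1$ with its own coefficients $c'_j=\prod_{k=0}^{j-1}\left(-\frac{[2(s-1-k)-1][2(m+s-1-k)-1]}{\kappa^2}\right)$.

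The crux of the argument, and the main bookkeeping obstacle, is verifying the compatibility relation $c_1\cdot c'_j = c_{j+1}$, so that multiplying the inductive hypothesis by $c_1$ produces precisely the coefficients $c_{s-j}$ (at level $s$) demanded by the proposition. This follows by a direct index shift in the product defining $c'_j$: writing $c'_j$ as $\prod_{k=1}^{j}\left(-\frac{[2(s-k)-1][2(m+s-k)-1]}{\kappa^2}\right)$ and prepending the $k=0$ factor $c_1$ gives $c_{j+1}$. In particular $c_1 c'_{s-1}=c_s$, which handles the remainder term.

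With this identity in hand, the inductive hypothesis multiplied by $c_1$ contributes the terms with $j=1,\dots,s-1$ in both sums together with $c_s I_1(m,m,\kappa,t)$, while the two explicit terms from the single application of (\ref{n6e16}) fit neatly as the $j=s$ entries: $t^{m+2s}=t^m\cdot t^{2s}$ with coefficient $c_0=c_{s-s}$ for the first sum, and $(2s-1)t^{m+2s-1}=t^{m-1}\cdot (2s-1)t^{2s}$ with coefficient $c_{s-s}(2(s)-1)$ for the second. Assembling these two pieces reproduces the claimed formula exactly, completing the induction.
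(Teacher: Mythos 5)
Your induction on $s$ via (\ref{n6e16}) is exactly the argument the paper intends (it states the result follows by iterating (\ref{n6e16}) and omits the details), and your bookkeeping — the identification $(n-1)^2-m^2=(2s-1)(2m+2s-1)$, the index-shift identity $c_1c'_j=c_{j+1}$, and the placement of the two explicit terms as the $j=s$ entries — all checks out. The proof is correct and takes essentially the same approach as the paper.
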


For the case of $m>n$, we present the corresponding stable recursive formulas with the help of (\ref{n6e11}) and (\ref{n6e14}). By iteration, we get from (\ref{n6e11}) that
\begin{equation}\label{n6e27}
  I(n,m,\kappa,t)=-\sum_{j=0}^{n-1}\frac{c_j}{\kappa}t^{n-j}J_{m-j-1,\kappa}(t)+c_nI_1(0,m-n,\kappa,t)
\end{equation}
where $c_0=1$ and
\[
c_j=\prod_{l=0}^{j-1}\frac{n+m-2l-1}{\kappa},\ j>0.
\]

Since the case of $I_1(0,1,\kappa,t)$ and $I_1(0,0,\kappa,t)$ has been solved, $I_1(0,m-n,\kappa,t)$ can be derived iteratively through (\ref{n6e14}). We then present the last two formulas for the purpose of calculation.

\begin{proposition} \label{n6p2}
  For $n, s\in\aN_0$ and $|\kappa|>0$,
  \begin{equation}\label{n6e28}
    \begin{split}
    I_{1}(n,n+1+2s,\kappa,t) =-&\sum_{j=2}^n\frac{c_{n-j}}{\kappa}t^jJ_{j+2s,\kappa}(t)- \left(\frac{c_{n-1}}{\kappa}+\frac{c_{n}}{2s} \right)t J_{2s+1,\kappa}(t)  \\ & -c_n\sum_{j=1}^{s-1}\frac{2j+1}{2j(j+1)}tJ_{2j+1,\kappa}(t)-\frac{c_n}{2}tJ_{1,\kappa}(t)-\frac{c_n}{\kappa} J_{0,\kappa}(t)
    \end{split}
  \end{equation}
  where $c_0=1$ and
  \[
  c_j=\prod_{l=0}^{j-1}\frac{2(n+s-l)}{\kappa}, \ j>0,
  \]
  and
  \begin{equation}\label{n6e29}
    \begin{split}
    I_{1}(n,n+2s,\kappa,t) =-&\sum_{j=2}^n\frac{c_{n-j}}{\kappa}t^jJ_{j+2s-1,\kappa}(t)- \left(\frac{c_{n-1}}{\kappa}+\frac{c_{n}}{2s-1} \right) t J_{2s,\kappa}(t)  \\ & -c_n\sum_{j=1}^{s-1}\frac{4j}{4j^2-1}tJ_{2j,\kappa}(t)- c_n tJ_{0,\kappa}(t)+c_nI(0,0,\kappa,t)
    \end{split}
  \end{equation}
  where $c_0=1$ and
  \[
  c_j=\prod_{l=0}^{j-1}\frac{2(n+s-l)-1}{\kappa}, \ j>0.
  \]
\end{proposition}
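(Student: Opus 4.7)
The plan is to start from the reduction (\ref{n6e27}), which was already obtained by iterating (\ref{n6e11}), and to eliminate the remaining integral $I_1(0,m-n,\kappa,t)$ by iterating (\ref{n6e14}) at $n=0$ down to a terminal case. Substituting $m=n+1+2s$ or $m=n+2s$ into (\ref{n6e27}) produces an integer $m-n$, so the descent in $m$ by steps of $2$ terminates either at $I_1(0,1,\kappa,t)=-J_{0,\kappa}(t)/\kappa$ in the odd case or at $I_1(0,0,\kappa,t)$ in the even case; the latter is left unresolved since Section~2.1 handles it separately. The coefficients $c_j$ in Proposition~\ref{n6p2} are exactly the products appearing in (\ref{n6e27}) with $m$ specialized, as is easily checked by expanding $\prod_{l=0}^{j-1}(n+m-2l-1)/\kappa$ for the two choices of $m$.

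The first step is to rewrite (\ref{n6e27}) with the shifted summation index $j'=n-j$, turning the sum $-\sum_{j=0}^{n-1}c_j t^{n-j}J_{m-j-1,\kappa}/\kappa$ into $-\sum_{j'=1}^{n}c_{n-j'}t^{j'}J_{j'+2s,\kappa}/\kappa$ (odd case) or $-\sum_{j'=1}^{n}c_{n-j'}t^{j'}J_{j'+2s-1,\kappa}/\kappa$ (even case). The second step is to iterate the $n=0$ specialization of (\ref{n6e14}),
\[
I_1(0,m,\kappa,t)=-\frac{t\bigl(J_{m,\kappa}(t)+J_{m-2,\kappa}(t)\bigr)}{m-1}+I_1(0,m-2,\kappa,t),
\]
repeatedly for $m=2s+1,2s-1,\dots,3$ in the odd case (respectively $m=2s,2s-2,\dots,2$ in the even case). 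This yields a telescoping-style sum in which each Bessel function $J_{2k+1,\kappa}$ (or $J_{2k,\kappa}$) appears twice, once from the $J_{m,\kappa}$ term at stage $m=2k+1$ and once from the $J_{m-2,\kappa}$ term at stage $m=2k+3$; combining the two occurrences produces the coefficient $-\frac{2k+1}{2k(k+1)}$ in (\ref{n6e28}) or $-\frac{4k}{4k^2-1}$ in (\ref{n6e29}), with the endpoint terms (at $k=s$ and at $k=0$ or $k=1$) left uncombined.

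The third step is to multiply the resulting expression for $I_1(0,m-n,\kappa,t)$ by $c_n$, add it to the shifted sum from step one, and merge the overlapping contributions at $j=1$: the term $-\frac{c_{n-1}}{\kappa}tJ_{2s+1,\kappa}$ from the shifted sum combines with the endpoint $-\frac{c_n}{2s}tJ_{2s+1,\kappa}$ from the iterated recurrence to give the composite coefficient $-(\frac{c_{n-1}}{\kappa}+\frac{c_n}{2s})$ appearing in (\ref{n6e28}), and analogously in (\ref{n6e29}). Once this bookkeeping is carried out, the two formulas (\ref{n6e28}) and (\ref{n6e29}) emerge directly; alternatively, one can take the right-hand sides of (\ref{n6e28}) and (\ref{n6e29}) as the induction hypothesis on $s$ (with $n$ fixed) and verify the step using (\ref{n6e14}) at $n=0$.

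The only genuine obstacle is the coefficient bookkeeping: making sure that the constants $c_j$ inherited from (\ref{n6e27}) agree with the products displayed in Proposition~\ref{n6p2}, and that the two sources of $J_{2k+1,\kappa}$ (or $J_{2k,\kappa}$) terms combine into the neat fractional coefficients shown. Since all operations are linear recursions with elementary index arithmetic, no analytical difficulty arises, and the induction on $s$ closes routinely. The details are omitted as indicated by the author.
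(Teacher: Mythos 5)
Your proposal is correct and follows exactly the route the paper indicates (the paper omits the details, stating only that the formulas follow by iterating (\ref{n6e27}) and then (\ref{n6e14}) at $n=0$ down to $I_1(0,1,\kappa,t)$ or $I_1(0,0,\kappa,t)$): the index shift, the specialization of the $c_j$, and the pairing of the two occurrences of each $J_{2k+1,\kappa}$ (resp.\ $J_{2k,\kappa}$) giving $\frac{1}{2k}+\frac{1}{2k+2}=\frac{2k+1}{2k(k+1)}$ and $\frac{1}{2k-1}+\frac{1}{2k+1}=\frac{4k}{4k^2-1}$ all check out. No gaps.
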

Note that when $s$ or $n$ equals 0 in Proposition \ref{n6p2}, the formula forms need make the obvious adjustment which is omitted here.

Formulas (\ref{n6e31}), (\ref{n6e30}), (\ref{n6e28}) and (\ref{n6e29}) form a complete scheme for the evaluation of $I_1(n,m,\kappa,b)$. With a little calculation, we can find that the absolute value of $c_j$ appeared in (\ref{n6e31}), (\ref{n6e30}), (\ref{n6e28}) and (\ref{n6e29}) is no more than $1$ when $|\kappa|\geq \max(n,m)$. Therefore, based on the recursive formulas, the scheme is especially fast and efficient when $|\kappa|\geq \max(n,m)$ and it is still applicable for the case when $\kappa b<0$. To better evaluate the moments of $I_1(n,m,\kappa,b)$, it is better to combine these formulas and the formulas (\ref{n6e22}) and (\ref{n6e23}) together since (\ref{n6e22}) is efficient for the case of $|\kappa|<\max(n,m)$  while (\ref{n6e23}) is efficient for very large $\kappa$. It will be illustrated in the section of Numerical results and a suggested application domain for each formula will be presented, too.

\section{Evaluation of $I_2(n,m,\kappa,b)$}
We study the evaluation of $I_2(n,m,\kappa,b)$ in this section. According to our analysis, they has the closed form. Therefore, we only need to analyze the corresponding indefinite integrals, denoted by $I_2(n,m,\kappa,t):=\int t^n J_{m,\kappa}(t) e^{i\kappa t}dt$.

We first present two main recursive relations for $I_2(n,m,\kappa,t)$ by using the technique of integration by parts and the property of Bessel functions.
\begin{lemma}For any integer $n$ and $m$, there exists
\begin{eqnarray}
  I_{2}(n,m,\kappa,t) &=& \frac{e^{i\kappa t}t^{n+1}}{n+m+1}\left[J_{m,\kappa}(t)-iJ_{m+1,\kappa}(t)\right]+ \frac{i(n-m)}{n+m+1} I_{2}(n,m+1,\kappa,t),\ \quad \quad\ \label{n6e17}\\
  I_{2}(n,m,\kappa,t) &=& \frac{e^{i\kappa t}t^{n+1}}{n-m+1}\left[J_{m,\kappa}(t)+iJ_{m-1,\kappa}(t)\right]-\frac{i(n+m)}{n-m+1} I_{2}(n,m-1,\kappa,t), \ \quad \quad\ \label{n6e18}
\end{eqnarray}
where $n+m+1\neq 0$ in (\ref{n6e17}) and $n-m+1\neq 0$ in (\ref{n6e18}).

Specially, if $n=m$ in (\ref{n6e17}), then
\begin{equation}\label{n6e19}
  I_{2}(n,n,\kappa,t) =\frac{e^{i\kappa t}t^{n+1}}{2n+1}\left[J_{n,\kappa}(t)-iJ_{n+1,\kappa}(t)\right].
\end{equation}
\end{lemma}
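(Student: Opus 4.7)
The plan is to establish both recurrences by producing an explicit antiderivative identity via integration by parts (equivalently, by differentiating a well-chosen ansatz and matching). The special case (\ref{n6e19}) will then fall out of (\ref{n6e17}) for free.

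For (\ref{n6e17}), I would start from the ansatz $F(t):=t^{n+1}e^{i\kappa t}\bigl(J_{m,\kappa}(t)-iJ_{m+1,\kappa}(t)\bigr)$ and compute $F'(t)$ by the product rule, substituting $J'_{m,\kappa}(t)=-\kappa J_{m+1,\kappa}(t)+(m/t)J_{m,\kappa}(t)$ from (\ref{n6e10}) and $J'_{m+1,\kappa}(t)=\kappa J_{m,\kappa}(t)-((m+1)/t)J_{m+1,\kappa}(t)$ from (\ref{n6e9}). The point of the combination $J_{m,\kappa}-iJ_{m+1,\kappa}$ (a Hankel-type pairing) is that all $t^{n+1}e^{i\kappa t}$ contributions arising from $i\kappa\cdot F$ and from the Bessel derivatives cancel: indeed $i\kappa J_{m,\kappa}+(\kappa J_{m,\kappa})\cdot(-i)$ kills the $J_{m,\kappa}$ part and similarly $(-i)(i\kappa)J_{m+1,\kappa}+(-\kappa)J_{m+1,\kappa}$ kills the $J_{m+1,\kappa}$ part. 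What remains is a pure $t^n$-order expression, which I expect to simplify to
\[
F'(t)=t^n e^{i\kappa t}\bigl[(n+m+1)J_{m,\kappa}(t)+i(m-n)J_{m+1,\kappa}(t)\bigr].
\]
Integrating in $t$ and solving for $I_2(n,m,\kappa,t)$ under $n+m+1\neq 0$ yields (\ref{n6e17}).

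For (\ref{n6e18}), I would apply the symmetric construction to $G(t):=t^{n+1}e^{i\kappa t}\bigl(J_{m,\kappa}(t)+iJ_{m-1,\kappa}(t)\bigr)$, now using (\ref{n6e9}) on $J'_{m,\kappa}$ and (\ref{n6e10}) on $J'_{m-1,\kappa}$. The same cancellation phenomenon eliminates the $t^{n+1}$ terms, leaving $G'(t)=t^n e^{i\kappa t}\bigl[(n-m+1)J_{m,\kappa}(t)-i(n+m)J_{m-1,\kappa}(t)\bigr]$; integrating and solving for $I_2(n,m,\kappa,t)$ under $n-m+1\neq 0$ gives (\ref{n6e18}). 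Finally, (\ref{n6e19}) is obtained by setting $n=m$ in (\ref{n6e17}): the coefficient $i(n-m)/(n+m+1)$ vanishes and only the boundary term survives.

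The only nontrivial step is verifying that the $t^{n+1}$-order terms in $F'$ and $G'$ really do cancel — this is the algebraic heart of the argument and the reason the specific linear combinations $J_{m,\kappa}\mp i J_{m\pm 1,\kappa}$ must be chosen. Once that cancellation is confirmed (a short computation using each Bessel recurrence exactly once), the remaining identification of coefficients on the $t^n$-order terms is routine bookkeeping that matches the claimed formulas term-by-term.
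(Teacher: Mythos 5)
Your proposal is correct in substance but organizes the argument differently from the paper. The paper derives each recurrence constructively in two stages: first an integration by parts against $dt^{n+1}$ combined with (\ref{n6e10}) (resp.\ (\ref{n6e9})), which produces an intermediate identity involving $I_2(n+1,m,\kappa,t)$ and $I_2(n+1,m\pm1,\kappa,t)$, and then a second integration by parts using (\ref{n6e5}) (resp.\ (\ref{n6e6})) to express $i\kappa I_2(n+1,m,\kappa,t)$ and eliminate the shifted-$n$ integrals. You instead guess the antiderivative $t^{n+1}e^{i\kappa t}\bigl(J_{m,\kappa}\mp iJ_{m\pm1,\kappa}\bigr)$ and verify by direct differentiation that the $\kappa$-proportional terms cancel, leaving only order-$t^n$ terms. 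This is a legitimate and arguably cleaner route: it avoids the paper's intermediate expressions and makes the role of the Hankel-type combination transparent, at the cost of requiring the answer in advance (which is fine for proving a stated lemma). Your computation of $F'$ is exactly right and yields (\ref{n6e17}) and hence (\ref{n6e19}). One sign slip in the second half: differentiating $G(t)=t^{n+1}e^{i\kappa t}\bigl(J_{m,\kappa}+iJ_{m-1,\kappa}\bigr)$ with (\ref{n6e9}) applied to $J_{m,\kappa}'$ and (\ref{n6e10}) applied to $J_{m-1,\kappa}'$ gives
\begin{equation*}
G'(t)=t^n e^{i\kappa t}\bigl[(n-m+1)J_{m,\kappa}(t)+i(n+m)J_{m-1,\kappa}(t)\bigr],
\end{equation*}
with a plus sign on the $J_{m-1,\kappa}$ term, not the minus sign you wrote; it is precisely this plus sign that, upon solving for $I_2(n,m,\kappa,t)$, produces the minus sign in front of $\frac{i(n+m)}{n-m+1}I_2(n,m-1,\kappa,t)$ in (\ref{n6e18}). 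With your stated $G'$ the recurrence would come out with the wrong sign, so correct that line; everything else stands.
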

\begin{proof}
  We first prove the formula (\ref{n6e17}). By using the integration by part, we easily get that
  \begin{equation*}
  \begin{split}
    I_{2}(n,m,\kappa,t)& =\frac{1}{n+1}\int J_{m,\kappa}(t)e^{i\kappa t}dt^{n+1} \\
    &=\frac{e^{i\kappa t}t^{n+1}}{n+1}J_{m,\kappa}(t)-\frac{1}{n+1}\int t^{n+1}e^{i\kappa t} \left(i\kappa J_{m,\kappa}(t) +J_{m,\kappa}'(t)\right)dt
  \end{split}
  \end{equation*}
  With the formula (\ref{n6e10}) for $J_{m,\kappa}'$, the above equation can be rewritten as
  \begin{equation*}
    I_{2}(n,m,\kappa,t)=\frac{e^{i\kappa t}t^{n+1}}{n+m+1} J_{m,\kappa}(t)-\frac{\kappa}{n+m+1}\left[iI_{2}(n+1,m,\kappa,t) -I_{2}(n+1,m+1,\kappa,t) \right].
  \end{equation*}
  We next apply equation (\ref{n6e5}) to $I_{2}(n+1,m,\kappa,t)$ and then use again the integration by part which shall give us that
  \begin{equation*}
  \begin{split}
    i\kappa I_{2}(n+1,m,\kappa,t)& =i \int t^{n-m}e^{i\kappa t}d\left( t^{m+1}J_{m+1,\kappa}(t)\right) \\
    &= it^{n+1}e^{i\kappa t}J_{m+1,\kappa}(t)-i(n-m)I_{2}(n,m+1,\kappa,t) +\kappa I_{2}(n+1,m+1,\kappa,t)
  \end{split}
  \end{equation*}
  Combining the above two equations, we obtain the desired formula (\ref{n6e17}).

  For formula (\ref{n6e18}), the proof is similar as that of formula (\ref{n6e17}). We first obtain by using the integration by part directly that
  \begin{equation*}
    I_{2}(n,m,\kappa,t)=\frac{e^{i\kappa t}t^{n+1}}{n-m+1}J_{m,\kappa}(t)-\frac{\kappa}{n-m+1}\left[i I_{2}(n+1,m,\kappa,t)+ I_{2}(n+1,m-1,\kappa,t)\right]
  \end{equation*}
  With the help of formula (\ref{n6e6}) and by using again the technique of integration by part, the integral $i\kappa I_{2}(n+1,m,\kappa,t)$ has the following expression,
  \begin{equation*}
  \begin{split}
    i\kappa I_{2}(n+1,m,\kappa,t)& =-i\int t^{n+m}e^{i\kappa t}d\left(t^{1-m}J_{m-1,\kappa}(t)\right) \\
    &=-it^{n+1}e^{i\kappa t}J_{m-1,\kappa}(t)+i(n+m)I_{2}(n,m-1,\kappa,t)-\kappa I_{2}(n+1,m-1,\kappa,t)
  \end{split}
  \end{equation*}
   Substituting the expression of $i\kappa I_{2}(n+1,m,\kappa,t)$ into $I_{2}(n,m,\kappa,t)$, we get the desired formula (\ref{n6e18}).
\end{proof}

With the formulas (\ref{n6e17}) and (\ref{n6e19}), the integrals $I_2(n,m,\kappa,t)$ with $n\geq m$ are easily obtained by iteration. We present the expression of $I_2(n,m,\kappa,t)$ without a proof.
\begin{proposition}
  For $n,m\in\aN_0$, $n\geq m$ and $|\kappa|>0$,
  \begin{equation}\label{n6e32}
    I_2(n,m,\kappa,t)=e^{i\kappa t}t^{n+1}\sum_{j=m}^n\frac{c_{j-m}}{n+j+1}\left(J_{j,\kappa}(t) -iJ_{j+1,\kappa}(t) \right)
  \end{equation}
  where $c_0=1$ and
  \[
  c_j=\prod_{k=m}^{m+j-1}\frac{i(n-k)}{n+k+1}, \ j>0.
  \]
\end{proposition}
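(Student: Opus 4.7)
The plan is to prove the formula by induction on $k := n-m \geq 0$, using the recursion (\ref{n6e17}) to descend from $I_{2}(n,m,\kappa,t)$ to $I_{2}(n,m+1,\kappa,t)$ and the closed form (\ref{n6e19}) to anchor the induction at $m=n$. This is natural because (\ref{n6e17}) peels off exactly one term of the shape $\frac{e^{i\kappa t}t^{n+1}}{n+m+1}[J_{m,\kappa}(t) - iJ_{m+1,\kappa}(t)]$ while leaving a residual integral with $m$ replaced by $m+1$, which is the same shape we are trying to expand. Iterating $n-m$ times, the residual eventually becomes $I_{2}(n,n,\kappa,t)$, and (\ref{n6e19}) accounts for the final term in the sum.

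For the base case $k=0$, formula (\ref{n6e19}) gives
\[
I_{2}(n,n,\kappa,t) = \frac{e^{i\kappa t}t^{n+1}}{2n+1}\bigl[J_{n,\kappa}(t)-iJ_{n+1,\kappa}(t)\bigr],
\]
which matches the stated formula (\ref{n6e32}) with the single term $j=m=n$ and $c_{0}=1$. For the inductive step, assuming (\ref{n6e32}) holds for $I_{2}(n,m+1,\kappa,t)$ with coefficients $c'_{l} = \prod_{k=m+1}^{m+l}\frac{i(n-k)}{n+k+1}$, substitute into (\ref{n6e17}) to obtain
\[
I_{2}(n,m,\kappa,t) = e^{i\kappa t}t^{n+1}\Biggl[\frac{1}{n+m+1}\bigl(J_{m,\kappa}(t)-iJ_{m+1,\kappa}(t)\bigr) + \sum_{j=m+1}^{n}\frac{i(n-m)\,c'_{j-m-1}}{(n+m+1)(n+j+1)}\bigl(J_{j,\kappa}(t)-iJ_{j+1,\kappa}(t)\bigr)\Biggr].
\]

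The remaining task is the coefficient bookkeeping: I need to verify that the $j=m$ term matches $\frac{c_{0}}{n+m+1}$ (immediate since $c_{0}=1$), and that for $j \geq m+1$ the identity $c_{j-m} = \frac{i(n-m)}{n+m+1}\,c'_{j-m-1}$ holds. Unpacking the products, $c'_{j-m-1} = \prod_{k=m+1}^{j-1}\frac{i(n-k)}{n+k+1}$, so multiplying by $\frac{i(n-m)}{n+m+1}$ simply extends the product down to $k=m$, yielding $\prod_{k=m}^{j-1}\frac{i(n-k)}{n+k+1} = c_{j-m}$. The main (though still minor) obstacle is keeping the two index conventions straight — the inductive coefficients $c'_{\cdot}$ attached to $I_{2}(n,m+1,\cdot,\cdot)$ versus the target coefficients $c_{\cdot}$ attached to $I_{2}(n,m,\cdot,\cdot)$ — but once the product over $k$ is written out explicitly the telescoping of the leading factor $\frac{i(n-m)}{n+m+1}$ into the product is transparent, completing the induction.
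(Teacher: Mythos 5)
Your induction on $n-m$, anchored at (\ref{n6e19}) and descending via (\ref{n6e17}), is exactly the iteration the paper indicates (and omits the details of), and your coefficient bookkeeping checks out: $\frac{i(n-m)}{n+m+1}\prod_{k=m+1}^{j-1}\frac{i(n-k)}{n+k+1}=\prod_{k=m}^{j-1}\frac{i(n-k)}{n+k+1}=c_{j-m}$, while the $j=m$ term carries $c_0=1$. The proof is correct and matches the paper's intended argument.
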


However, it fails to evaluate the case $I_2(n,m,\kappa,t)$ with $n<m$ by the formulas (\ref{n6e18}) and (\ref{n6e19}). It is because that $(\ref{n6e18})$ fails for $n=m-1$. To solve this problem, we next present an explicit expression for the case $I_2(n,n+1,\kappa,t)$.
\begin{proposition}\label{n6p1}
For $n\in\aN_0$ and $|\kappa|>0$,
  \begin{equation}\label{n6e20}
  \begin{split}
    I_2(n,n+1,\kappa,t)& =c_n\left(it-\frac{1}{\kappa}\right)e^{i\kappa t}J_{0,\kappa}(t) +\sum_{j=1}^n\left( \frac{i t c_{n-j}}{2j+1}-\frac{c_{n-j}}{\kappa}+\frac{c_{n-j+1}}{2j-1}\right)t^j e^{i\kappa t}J_{j,\kappa}(t) \\
    &\quad\quad\quad\quad \quad\quad\quad\quad \quad\quad\quad +\frac{c_0}{2n+1} t^{n+1}e^{i\kappa t}J_{n+1,\kappa}(t)
    \end{split}
  \end{equation}
  where $c_0=1$ and
  \[
  c_j=\prod_{k=0}^{j-1}\frac{2(n-k)}{\kappa},\ j>0.
  \]
\end{proposition}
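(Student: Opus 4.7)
The plan is to prove the proposition by induction on $n$, driven by a recurrence derived from the Bessel three-term relation $J_{m+1}(x) = \frac{2m}{x}J_m(x) - J_{m-1}(x)$. Applying this identity with $m = n$ to the integrand of $I_2(n,n+1,\kappa,t)$ and integrating yields
\[
I_2(n,n+1,\kappa,t) = \frac{2n}{\kappa}I_2(n-1,n,\kappa,t) - I_2(n,n-1,\kappa,t).
\]
This is the right recurrence because $I_2(n,n-1,\kappa,t)$ satisfies $n\geq m$ and so admits the closed form (\ref{n6e32}), while $I_2(n-1,n,\kappa,t)$ is again of the diagonal form $I_2(k,k+1,\kappa,t)$ with $k = n-1$, supplied by the inductive hypothesis.

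For the base case $n = 0$, I would evaluate $I_2(0,1,\kappa,t) = \int e^{i\kappa t}J_{1,\kappa}(t)\,dt$ directly by using $\kappa J_{1,\kappa}(t) = -J_{0,\kappa}'(t)$ from (\ref{n6e6}) together with integration by parts, and then substituting the closed form (\ref{n6e19}) for $I_2(0,0,\kappa,t)$; the result $(it - 1/\kappa)e^{i\kappa t}J_{0,\kappa}(t) + te^{i\kappa t}J_{1,\kappa}(t)$ agrees with the statement at $n = 0$ (where $c_0 = 1$ and the sum is empty). For the inductive step I would first record the coefficient identity $(2n/\kappa)\,\tilde c_j = c_{j+1}$ relating the coefficients $\tilde c_j = \prod_{k=0}^{j-1}\tfrac{2(n-1-k)}{\kappa}$ of the case $n-1$ to the coefficients $c_j = \prod_{k=0}^{j-1}\tfrac{2(n-k)}{\kappa}$ of the case $n$. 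This identity makes $(2n/\kappa)$ times the inductive hypothesis for $I_2(n-1,n,\kappa,t)$ reproduce exactly the target terms indexed by $j = 0,1,\ldots,n-1$, together with a single leftover piece $\tfrac{c_1}{2n-1}t^n e^{i\kappa t}J_{n,\kappa}(t)$.

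The main obstacle will be the algebraic match with the explicit form of $I_2(n,n-1,\kappa,t)$ delivered by (\ref{n6e32}), because it contributes a $t^{n+1}J_{n-1,\kappa}(t)/(2n)$ piece with no counterpart in the target. I would eliminate this offending term via the Bessel identity $J_{n-1,\kappa}(t) = \tfrac{2n}{\kappa t}J_{n,\kappa}(t) - J_{n+1,\kappa}(t)$, which converts it into $-t^n J_{n,\kappa}(t)/\kappa$ plus $t^{n+1}J_{n+1,\kappa}(t)/(2n)$. The short arithmetic $\tfrac{1}{2n} - \tfrac{1}{2n(2n+1)} = \tfrac{1}{2n+1}$ then produces precisely the coefficient $\tfrac{c_0}{2n+1}$ of $t^{n+1}J_{n+1,\kappa}(t)$ demanded by the proposition, and the remaining $t^n J_{n,\kappa}$ and $t^{n+1}J_{n,\kappa}$ contributions combine with the leftover from the inductive hypothesis to form the full $j = n$ term $\bigl(\tfrac{itc_0}{2n+1} - \tfrac{c_0}{\kappa} + \tfrac{c_1}{2n-1}\bigr)t^n e^{i\kappa t}J_{n,\kappa}(t)$ of the sum, closing the induction.
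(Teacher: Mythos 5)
Your argument is correct and it does close: the recurrence $I_2(n,n+1,\kappa,t)=\frac{2n}{\kappa}I_2(n-1,n,\kappa,t)-I_2(n,n-1,\kappa,t)$ follows at once from the three-term relation, the coefficient identity $\frac{2n}{\kappa}\tilde c_j=c_{j+1}$ is exactly what makes the inductive hypothesis reproduce the terms $j=0,\dots,n-1$, and the arithmetic $\frac{1}{2n}-\frac{1}{2n(2n+1)}=\frac{1}{2n+1}$ does yield the top coefficient $\frac{c_0}{2n+1}$. The paper reaches the same induction through a different door: it integrates by parts using (\ref{n6e6}) to obtain $I_{2}(n,n+1,\kappa,t)=-\frac{e^{i\kappa t}t^n}{\kappa} J_{n,\kappa}(t) + \frac{2n}{\kappa} I_{2}(n-1,n,\kappa,t) +iI_{2}(n,n,\kappa,t)$ and then inserts the one-term closed form (\ref{n6e19}) for $I_2(n,n,\kappa,t)$. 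The two recurrences are in fact the same identity---differentiating $\frac{t^ne^{i\kappa t}}{\kappa}J_{n,\kappa}(t)$ with the help of (\ref{n6e5}) shows $-I_2(n,n-1,\kappa,t)=-\frac{e^{i\kappa t}t^n}{\kappa}J_{n,\kappa}(t)+iI_2(n,n,\kappa,t)$---so the difference is only where the work lands. Your derivation of the recurrence is more elementary (no integration by parts), but you pay for it downstream: your non-inductive input is the two-term formula (\ref{n6e32}) for $I_2(n,n-1,\kappa,t)$, which must then be massaged with a second application of the three-term identity to eliminate the $J_{n-1,\kappa}$ contribution, whereas the paper's choice of $I_2(n,n,\kappa,t)$ hands it a single-term expression needing no such cleanup. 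Both are complete proofs of (\ref{n6e20}).
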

\begin{proof}
    With the help of formula (\ref{n6e6}) and by  the technique of integration by part, we have that
  \begin{equation*}
  \begin{split}
    I_{2}(n,m,\kappa,t)& =-\frac{1}{\kappa}\int t^{n-1+m}e^{i\kappa t}d\left(t^{1-m}J_{m-1,\kappa}(t)\right) \\
    &=-\frac{1}{\kappa} t^{n}e^{i\kappa t}J_{m-1,\kappa}(t)+ \frac{1}{\kappa}(n-1+m)I_{2}(n-1,m-1,\kappa,t)+iI_{2}(n,m-1,\kappa,t)
  \end{split}
  \end{equation*}
   Setting $m=n+1$, we get a recursive formula for $I_2(n,n+1,\kappa,t)$ that
  \begin{equation}
    I_{2}(n,n+1,\kappa,t)=-\frac{e^{i\kappa t}t^n}{\kappa} J_{n,\kappa}(t) + \frac{2n}{\kappa} I_{2}(n-1,n,\kappa,t) +iI_{2}(n,n,\kappa,t), \ n\in\aZ.
  \end{equation}
  Specially, we have for $n=0$ that $I_{2}(0,1,\kappa,t)=-\frac{e^{i\kappa t}}{\kappa} J_{0,\kappa}(t)+iI_{2}(0,0,\kappa,t)$. Hence, the proof is easily finished by induction.
\end{proof}

With Proposition \ref{n6p1}, we can obtain the explicit formula for $I_2(n,m,\kappa,t)$ with $n<m$ by the recursive use of (\ref{n6e18}). We give the corresponding results in the next proposition without a proof.
\begin{proposition}
  For $n, m\in\aN_0$, $n<m$ and $|\kappa|>0$,
  \begin{equation}\label{n6e33}
    I_2(n,m,\kappa,t)=t^{n+1}e^{i\kappa t}\sum_{j=n+2}^m\frac{c_{m-j}}{n-j+1} \left[J_{j,\kappa}(t)+iJ_{j-1,\kappa}(t)\right] +c_{m-n-1}I_2(n,n+1,\kappa,t)
  \end{equation}
  where $c_0=1$ and
  \[
  c_j=\prod_{k=m+1-j}^{m}\frac{-i(n+k)}{n-k+1}, \ j>0.
  \]
\end{proposition}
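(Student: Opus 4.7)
The plan is to prove the formula by induction on the difference $m-n$, using the recursive relation (\ref{n6e18}) as the workhorse and treating $I_2(n,n+1,\kappa,t)$ (already in closed form by Proposition \ref{n6p1}) as the terminal case of the recursion.

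The base case is $m-n=1$, i.e.\ $m=n+1$. Then the sum $\sum_{j=n+2}^{m}$ is empty and the coefficient of $I_2(n,n+1,\kappa,t)$ is $c_{m-n-1}=c_0=1$, so the identity reduces to $I_2(n,n+1,\kappa,t)=I_2(n,n+1,\kappa,t)$, which is trivial. Note that we may apply (\ref{n6e18}) whenever $m\geq n+2$, since the forbidden case $n-m+1=0$ corresponds precisely to $m=n+1$, the step at which we stop iterating.

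For the inductive step, assume the formula holds at $(n,m-1)$ with $m-1>n$, and apply (\ref{n6e18}) to $I_2(n,m,\kappa,t)$:
\begin{equation*}
I_2(n,m,\kappa,t) \;=\; \frac{t^{n+1}e^{i\kappa t}}{n-m+1}\bigl[J_{m,\kappa}(t)+iJ_{m-1,\kappa}(t)\bigr] \;-\;\frac{i(n+m)}{n-m+1}\,I_2(n,m-1,\kappa,t).
\end{equation*}
The first term on the right is exactly the $j=m$ contribution in the claimed sum (since $c_0=1$). For the second term, substitute the induction hypothesis for $I_2(n,m-1,\kappa,t)$, which produces coefficients $\tilde c_j=\prod_{k=m-j}^{m-1}\frac{-i(n+k)}{n-k+1}$ attached to the sum (running $j=n+2,\ldots,m-1$) and to the tail $I_2(n,n+1,\kappa,t)$. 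The key algebraic step, which I would state as a small observation before the induction, is the telescoping identity
\begin{equation*}
\frac{-i(n+m)}{n-m+1}\cdot\tilde c_j \;=\; c_{j+1},
\end{equation*}
which follows directly from the product definitions: prepending the factor $\frac{-i(n+m)}{n-m+1}$ to the product $\prod_{k=m-j}^{m-1}$ extends it to $\prod_{k=m-j}^{m}=\prod_{k=m+1-(j+1)}^{m}$, matching the definition of $c_{j+1}$ given in the proposition. Applied to the tail, $\frac{-i(n+m)}{n-m+1}\tilde c_{m-n-2}=c_{m-n-1}$, which matches the coefficient of $I_2(n,n+1,\kappa,t)$ in the target identity. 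Applied inside the sum, the coefficient $\tilde c_{m-1-j}$ at index $j$ becomes $c_{m-j}$, again matching. Reindexing confirms the sum now runs $j=n+2,\ldots,m-1$, and adjoining the boundary term recovers the full range $j=n+2,\ldots,m$.

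No step is a genuine obstacle; the entire content is careful bookkeeping of the product indices. The one place where I would be most careful is the index shift in the definition of $c_j$ between the $(n,m)$ and $(n,m-1)$ versions of the hypothesis, because the product's lower limit depends on $m$; the telescoping identity above is precisely designed to absorb this shift cleanly.
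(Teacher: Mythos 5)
Your proof is correct and takes exactly the route the paper intends: the paper states this proposition without proof, noting only that it follows "by the recursive use of (\ref{n6e18})" starting from the closed form of $I_2(n,n+1,\kappa,t)$ in Proposition \ref{n6p1}, and your induction on $m-n$ with the telescoping identity $\frac{-i(n+m)}{n-m+1}\tilde c_j=c_{j+1}$ is precisely the bookkeeping that makes this rigorous. The index shifts all check out, and your observation that the excluded case $n-m+1=0$ of (\ref{n6e18}) coincides with the terminal step $m=n+1$ correctly justifies each application of the recursion.
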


Formulas (\ref{n6e32}), (\ref{n6e20}) and (\ref{n6e33}) give a practical way to analyze the moments $I_2(n,m,\kappa,b)$ than that of formulas (\ref{n6e24}) and (\ref{n6e25}). We also note that formulas (\ref{n6e20}) and (\ref{n6e33}) are more suitable for the case $|\kappa|\geq 2n$ which is quite common in practice since the error will not be amplified during the iteration.

\section{Numerical Results}
We present several numerical results to validate the accuracy of the formulas proposed in Section 2 for $I_1(n,m,\kappa,b)$ and then compare the computation time with the formulas (\ref{n6e22}) and (\ref{n6e23}) to determine the application range of each method. We shall not present numerical experiments for $I_2(n,m,\kappa,b)$ since there is no other proper methods to compare with and the formulas deduced for $I_2(n,m,\kappa,b)$ are all in the closed form. The computation was done by the software Matlab on a laptop with an Intel(R) Core(TM) i5-4200U CPU @ 1.60GHZ 2.30GHz.

For the accuracy, five pairs of values for $[n,m]$ are selected: $[0,0], [5,3], [5,4], [5,6]$, and $[5,7]$ and three typical values of $\kappa$ are chosen: $1, 10$ and $100$. Let the parameter $b$ range from $0.1$ to $1$ with an interval $0.01$. The reference values for $I_1(n,m,\kappa,b)$ are obtained from the scheme, when $\kappa b\leq60$, using formula (\ref{n6e22}) with 100 truncated terms and when $\kappa b>60$, using formula (\ref{n6e23}) with 20 truncated terms. The accuracy of the reference values have also been validated by Mathematics 8.0. The absolute errors of $I_1(n,m,\kappa,b)$ for these cases calculated by the formulas proposed in Section 2 are shown in Figs. \ref{n6f3}-\ref{n6f7}. Some of the error curves in these figures are broken and the reason is that we plot the errors with logarithmic scale while some errors computed by the software Matlab are zero.  Fig. \ref{n6f3} validates the accuracy within the machine tolerance of the scheme proposed in evaluation of $I_1(0,0,\kappa,b)$. Since the evaluations of $I_1(5,3,\kappa,b)$ and $I_1(5,7,\kappa,b)$ depends on $I_1(0,0,\kappa,b)$ by formulas (\ref{n6e30}) and (\ref{n6e29}), respectively, the error of $I_1(0,0,\kappa,b)$ will transfer largely to $I_1(5,3,\kappa,b)$ and $I_1(5,7,\kappa,b)$ when $\kappa$ is relatively small with respect to $n$ and $m$. It is why the errors increase when $\kappa=1$ in Figs. \ref{n6f4} and \ref{n6f7}. According to formulas (\ref{n6e31}) and (\ref{n6e28}), $I_1(5,4,\kappa,b)$ and $I_1(5,6,\kappa,b)$ should give the exact values. In fact,  however, $I_1(5,6,\kappa,b)$ has relatively large errors  when $\kappa=1$ shown in Fig. \ref{n6f6}. It is because that the evaluations of Bessel functions have small errors and they can be amplified by the iteration when $\kappa$ is small and then transfer to $I_1(5,6,\kappa,b)$. Among all the figures, we can derive that the formulas derived by iteration in Section 2 behaves greatly when $\kappa$ is relatively large with respect to $n$ and $m$ and easily reach the machine tolerance.

\begin{minipage}[t]{0.5\linewidth}
\centering
\includegraphics[width=3in]{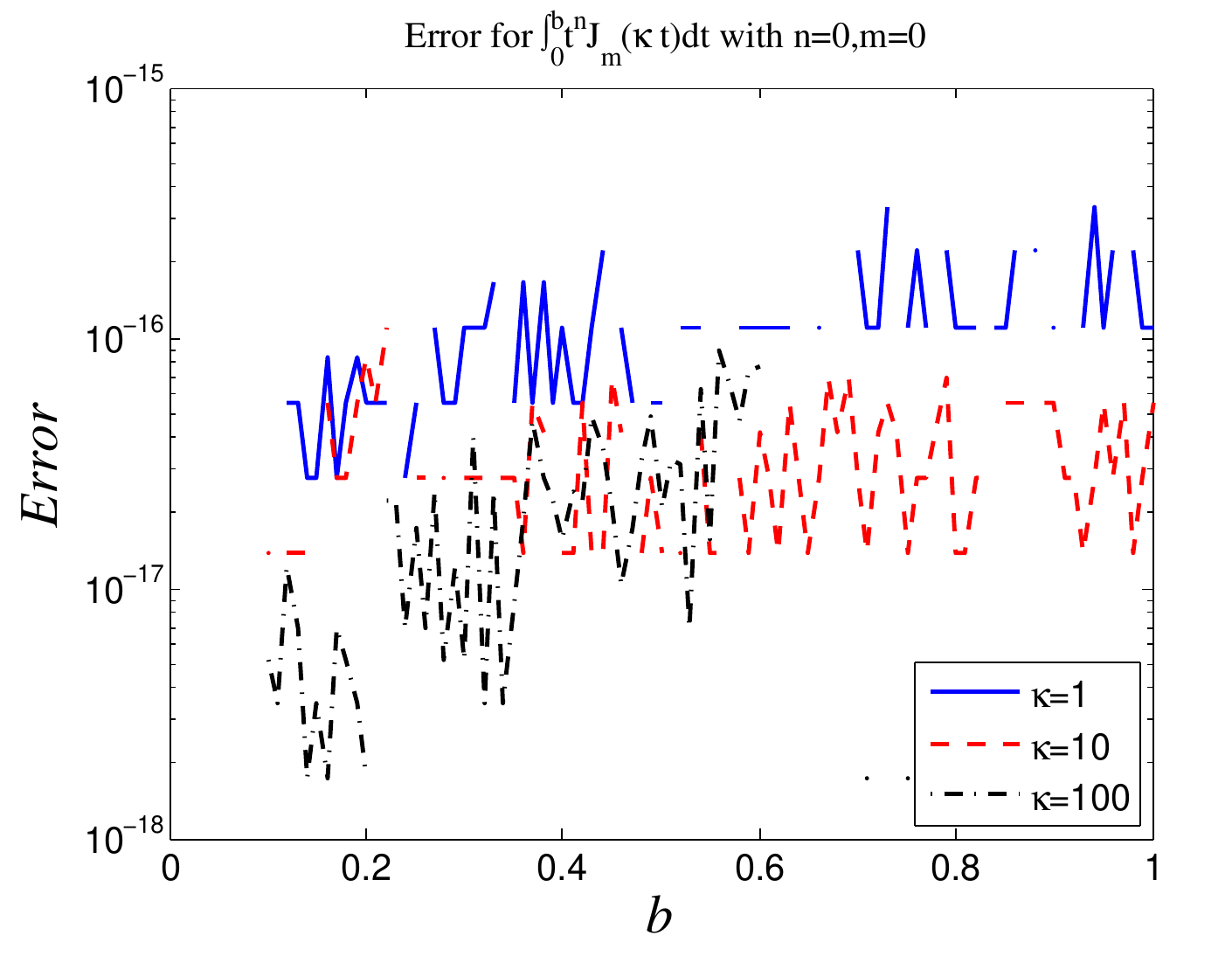}
\figcaption{The absolute errors of $I_1(0,0,\kappa,b)$}
\label{n6f3}
\end{minipage}%
\begin{minipage}[t]{0.5\linewidth}
\centering
\includegraphics[width=3in]{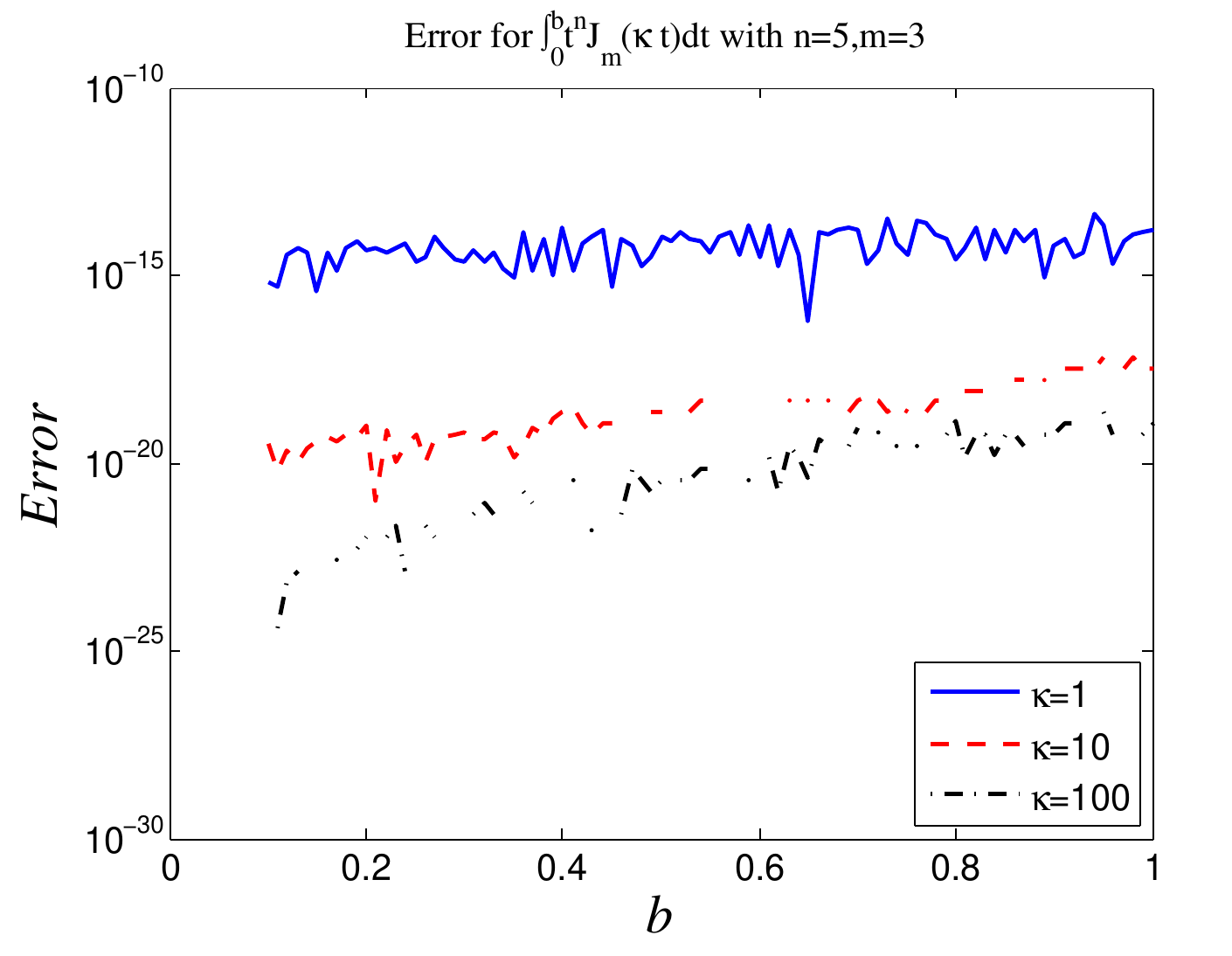}
\figcaption{The absolute errors of $I_1(5,3,\kappa,b)$}
\label{n6f4}
\end{minipage}

\begin{minipage}[t]{0.5\linewidth}
\centering
\includegraphics[width=3in]{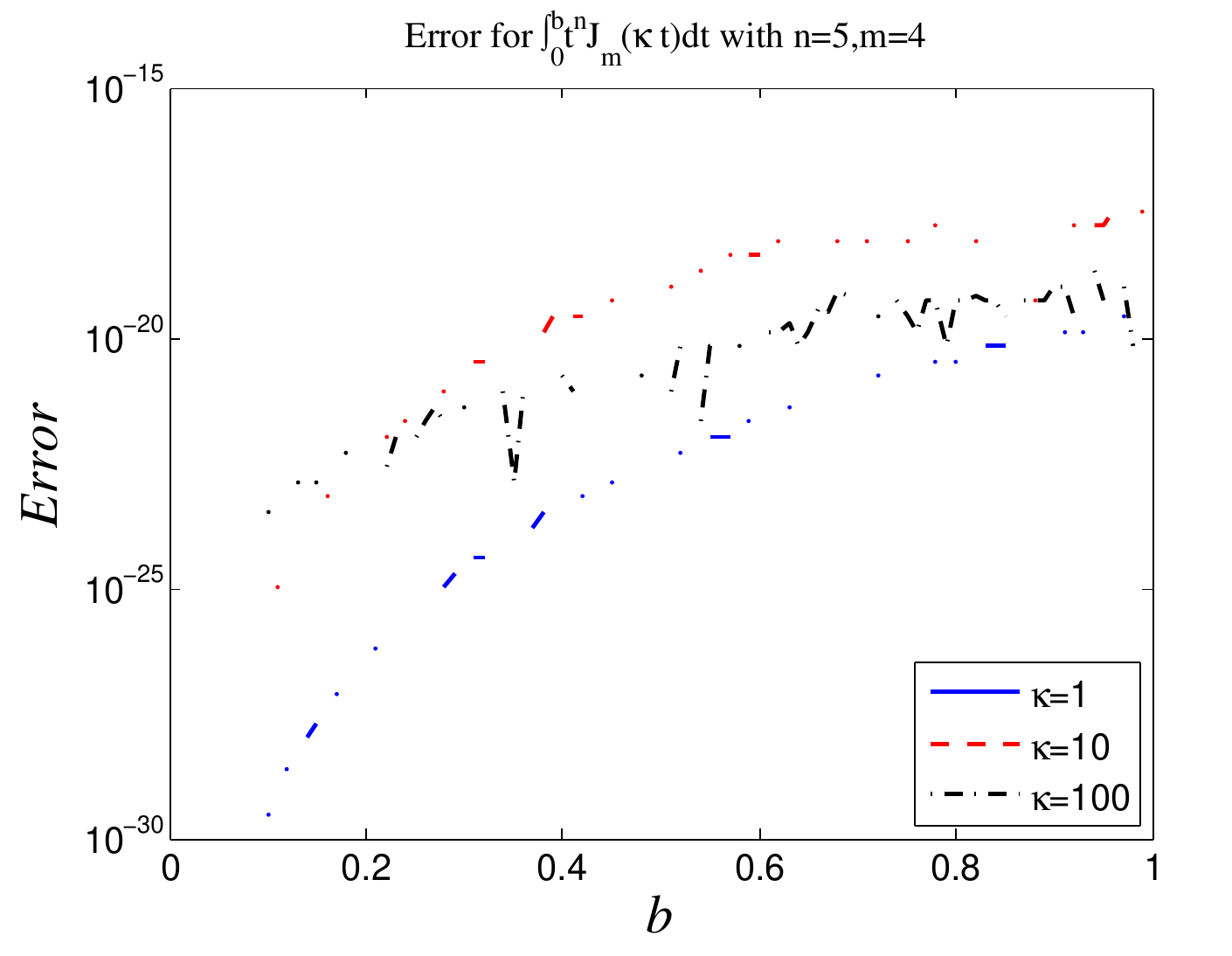}
\figcaption{The absolute errors of $I_1(5,4,\kappa,b)$}
\label{n6f5}
\end{minipage}%
\begin{minipage}[t]{0.5\linewidth}
\centering
\includegraphics[width=3in]{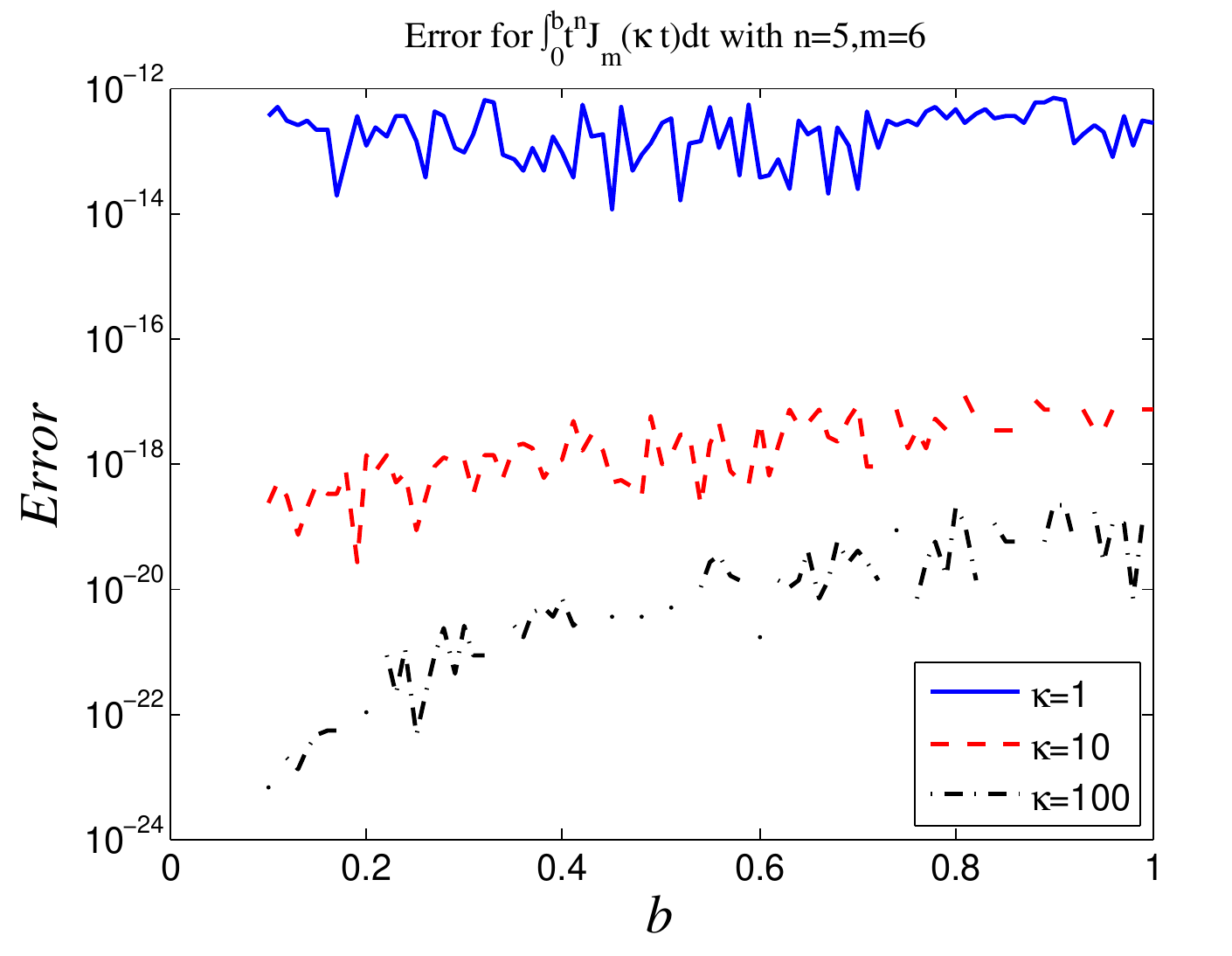}
\figcaption{The absolute errors of $I_1(5,6,\kappa,b)$}
\label{n6f6}
\end{minipage}

\begin{minipage}[t]{0.5\linewidth}
\centering
\includegraphics[width=3in]{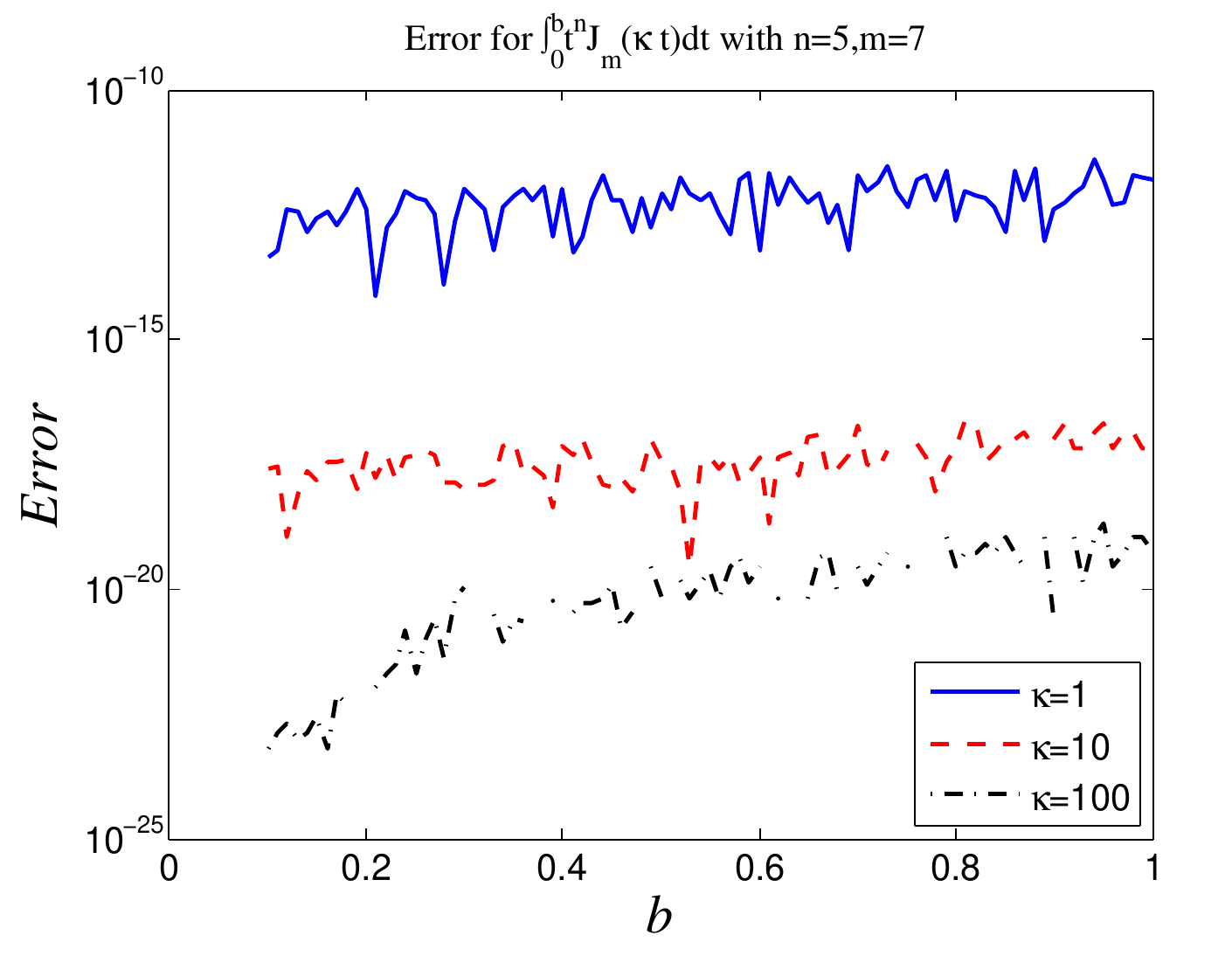}
\figcaption{The absolute errors of $I_1(5,7,\kappa,b)$}
\label{n6f7}
\end{minipage}%
\begin{minipage}[t]{0.5\linewidth}
\centering
\includegraphics[width=2in]{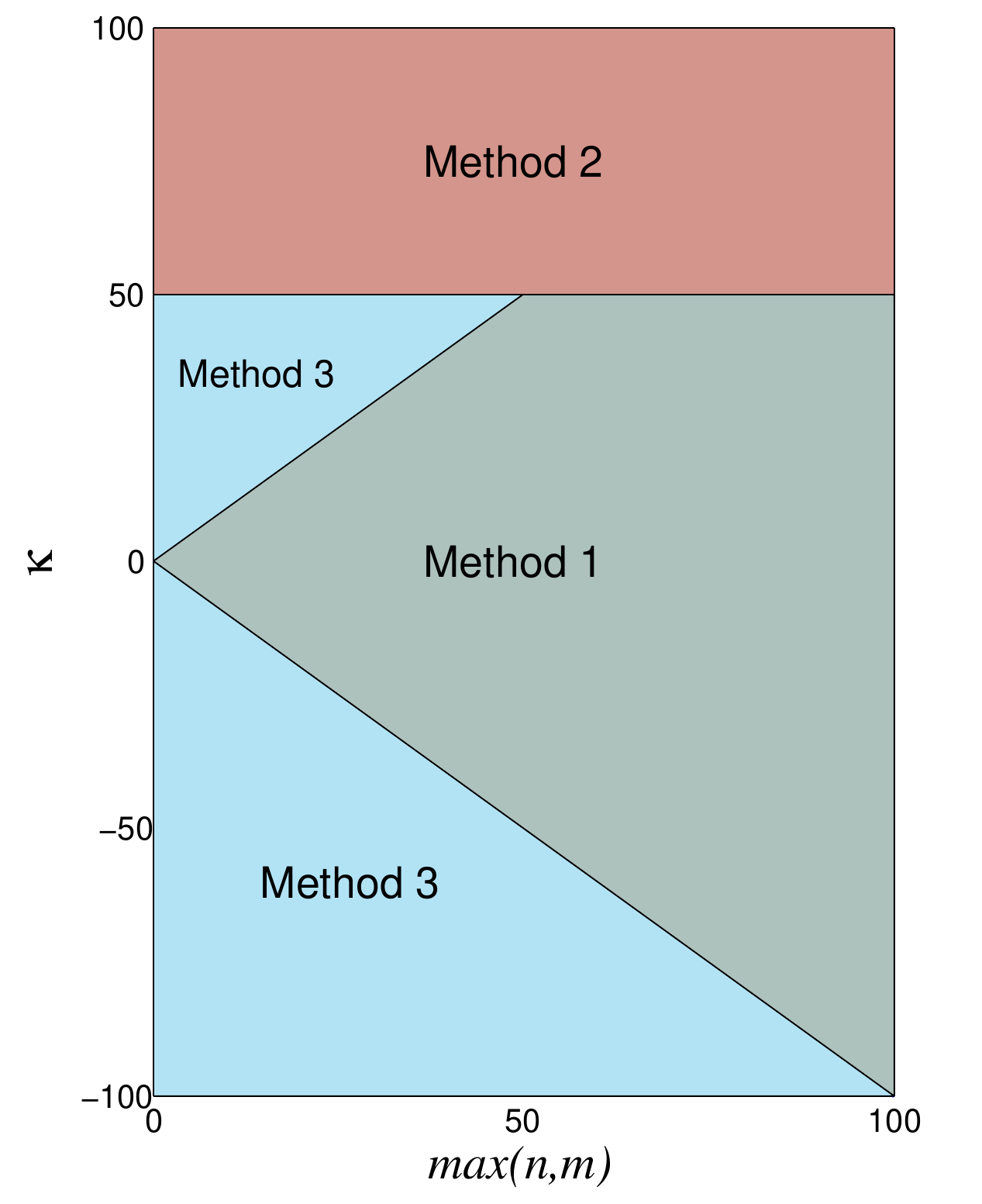}
\figcaption{A suggested application domain for each method.}
\label{n6f8}
\end{minipage}

We next carry out an numerical experiment by comparing the efficiency of each formula in evaluating $I_1(n,m,\kappa,b)$. For this purpose, we denote the formula (\ref{n6e22}) with certain truncated terms method 1, the formula (\ref{n6e23}) with certain truncated terms method 2 and the formulas derived in Section 2 method 3. Let $b$ be fixed 1, $n, m$ range in $[0,16]$  and $\kappa$ in $[1, 100]$.  The reference values for $I_1(n,m,\kappa,b)$ are derived numerically as the first numerical experiment. We record the computation time of each method running 100 times when the error for each method with proper truncated terms reaches the machine tolerance. For method 1, the number of truncated terms increases one by one before it reaches the machine tolerance or the number is bigger than 100. For method 2, the number of truncated terms adds one by one before it reaches the machine tolerance or the number is bigger than 60. The third method does not require such a number. If the methods fail to get the machine tolerance, its corresponding time will be denoted as $Inf$. To have a better view of the tables of computation time for each method, we color the columns of $\kappa=50, 100$ red and the elements of $\kappa\geq\max(n,m)$ blue.
We have a clear observation from Tables \ref{n6ta1}-\ref{n6ta9} that Method 3 is the most efficient when $n+m\leq 1$ while Method 2 is the most efficient when $\kappa \geq 50$. When $\kappa<\max(n,m)$, Method 3 may not touch the machine tolerance which is shown in Table \ref{n6ta3} and \ref{n6ta6} because the iterations happened in this case may not stable. Fortunately, Method 1 performs well when $\kappa<\max(n,m)$ which is illustrated in Tables \ref{n6ta1} and \ref{n6ta4}.
For the case of $n=0$, Method 3 is effective shown in Tables \ref{n6ta7} which seems that it is not influenced by the iterations when $\kappa$ is small. It is because that the Bessel functions decays exponentially when $m>\kappa$.
When $\kappa>\max(n,m)$ and $\kappa<50$, both Method 1 and 3 can reach the machine tolerance and Method 3 is more efficient than Method 1.

In the end, we may present a suggested scheme by combining these three methods to evaluate the moments $I_1(n,m,\kappa,b)$ accurately and fast. The suggested application domain of each method is shown in Fig. \ref{n6f8} and the scheme is: when $\kappa b>50$, method 2 with a proper number (for example, 11,) of truncated terms is used; when $\kappa b<50$ and $|\kappa|\geq \max(n,m)$, Method 3 is adopted; when $\kappa b<50$  and $|\kappa|<\max(n,m)$, method 1 with a proper number (for example, 15,) of truncated terms is the best choice.

\begin{center}
\small
\tabcaption{Computation time of Method 1 with $n=m$ }
\label{n6ta1}
\begin{tabular}{c|c|cccccc}
\hline
n & m & $\kappa=1$ & $\kappa=5$ & $\kappa=10$ & $\kappa=20$ & $\kappa=50$ & $\kappa=100$\\
\hline
$0$&$0$&$\color{blue}{3.93e-2}$&$\color{blue}{7.06e-2}$&$\color{blue}{8.71e-2}$&$\color{blue}{1.29e-1}$&$\color{red}{2.86e-1}$&$\color{red}{Inf}$\\
$1$&$1$&$\color{blue}{3.11e-2}$&$\color{blue}{6.89e-2}$&$\color{blue}{9.91e-2}$&$\color{blue}{1.50e-1}$&$\color{red}{2.75e-1}$&$\color{red}{Inf}$\\
$2$&$2$&$3.75e-2$&$\color{blue}{5.99e-2}$&$\color{blue}{1.17e-1}$&$\color{blue}{1.51e-1}$&$\color{red}{2.32e-1}$&$\color{red}{Inf}$\\
$3$&$3$&$3.72e-2$&$\color{blue}{6.66e-2}$&$\color{blue}{8.11e-2}$&$\color{blue}{1.17e-1}$&$\color{red}{2.57e-1}$&$\color{red}{Inf}$\\
$4$&$4$&$3.37e-2$&$\color{blue}{7.40e-2}$&$\color{blue}{1.00e-1}$&$\color{blue}{1.56e-1}$&$\color{red}{2.77e-1}$&$\color{red}{Inf}$\\
$6$&$6$&$2.71e-2$&$5.57e-2$&$\color{blue}{9.30e-2}$&$\color{blue}{1.41e-1}$&$\color{red}{2.70e-1}$&$\color{red}{5.84e-1}$\\
$8$&$8$&$2.34e-2$&$4.74e-2$&$\color{blue}{8.81e-2}$&$\color{blue}{1.26e-1}$&$\color{red}{1.96e-1}$&$\color{red}{4.43e-1}$\\
$10$&$10$&$1.38e-2$&$3.18e-2$&$\color{blue}{5.21e-2}$&$\color{blue}{9.54e-2}$&$\color{red}{2.45e-1}$&$\color{red}{4.19e-1}$\\
$12$&$12$&$1.26e-2$&$2.65e-2$&$4.45e-2$&$\color{blue}{1.34e-1}$&$\color{red}{1.98e-1}$&$\color{red}{5.05e-1}$\\
$14$&$14$&$1.47e-2$&$2.33e-2$&$4.16e-2$&$\color{blue}{7.33e-2}$&$\color{red}{2.14e-1}$&$\color{red}{2.64e-1}$\\
$16$&$16$&$2.76e-2$&$2.33e-2$&$3.92e-2$&$\color{blue}{1.04e-1}$&$\color{red}{1.40e-1}$&$\color{red}{2.96e-1}$\\
\hline
\end{tabular}
\end{center}

\begin{center}
\small
\tabcaption{Computation time of Method 2 with $n=m$}
\label{n6ta2}
\begin{tabular}{c|c|cccccc}
\hline
n & m & $\kappa=1$ & $\kappa=5$ & $\kappa=10$ & $\kappa=20$ & $\kappa=50$ & $\kappa=100$\\
\hline
$0$&$0$&$\color{blue}{2.39e-2}$&$\color{blue}{Inf}$&$\color{blue}{Inf}$&$\color{blue}{Inf}$&$\color{red}{1.87e-2}$&$\color{red}{1.64e-2}$\\
$1$&$1$&$\color{blue}{Inf}$&$\color{blue}{Inf}$&$\color{blue}{Inf}$&$\color{blue}{Inf}$&$\color{red}{2.23e-2}$&$\color{red}{2.21e-2}$\\
$2$&$2$&$Inf$&$\color{blue}{Inf}$&$\color{blue}{Inf}$&$\color{blue}{Inf}$&$\color{red}{2.06e-2}$&$\color{red}{1.53e-2}$\\
$3$&$3$&$Inf$&$\color{blue}{Inf}$&$\color{blue}{Inf}$&$\color{blue}{Inf}$&$\color{red}{1.52e-2}$&$\color{red}{2.09e-2}$\\
$4$&$4$&$Inf$&$\color{blue}{Inf}$&$\color{blue}{Inf}$&$\color{blue}{Inf}$&$\color{red}{1.44e-2}$&$\color{red}{2.40e-2}$\\
$6$&$6$&$Inf$&$Inf$&$\color{blue}{Inf}$&$\color{blue}{Inf}$&$\color{red}{3.52e-2}$&$\color{red}{2.63e-2}$\\
$8$&$8$&$Inf$&$Inf$&$\color{blue}{Inf}$&$\color{blue}{Inf}$&$\color{red}{1.97e-2}$&$\color{red}{1.75e-2}$\\
$10$&$10$&$Inf$&$Inf$&$\color{blue}{Inf}$&$\color{blue}{Inf}$&$\color{red}{1.64e-2}$&$\color{red}{2.10e-2}$\\
$12$&$12$&$Inf$&$Inf$&$Inf$&$\color{blue}{Inf}$&$\color{red}{3.29e-2}$&$\color{red}{2.20e-2}$\\
$14$&$14$&$Inf$&$Inf$&$Inf$&$\color{blue}{Inf}$&$\color{red}{3.24e-2}$&$\color{red}{1.49e-2}$\\
$16$&$16$&$Inf$&$Inf$&$Inf$&$\color{blue}{Inf}$&$\color{red}{1.61e-2}$&$\color{red}{2.44e-2}$\\
\hline
\end{tabular}
\end{center}

\begin{center}
\small
\tabcaption{Computation time of Method 3 with $n=m$}
\label{n6ta3}
\begin{tabular}{c|c|cccccc}
\hline
n & m & $\kappa=1$ & $\kappa=5$ & $\kappa=10$ & $\kappa=20$ & $\kappa=50$ & $\kappa=100$\\
\hline
$0$&$0$&$\color{blue}{6.69e-3}$&$\color{blue}{9.10e-3}$&$\color{blue}{8.42e-3}$&$\color{blue}{6.57e-3}$&$\color{red}{9.12e-3}$&$\color{red}{1.10e-2}$\\
$1$&$1$&$\color{blue}{1.65e-2}$&$\color{blue}{2.36e-2}$&$\color{blue}{1.87e-2}$&$\color{blue}{2.01e-2}$&$\color{red}{2.45e-2}$&$\color{red}{2.19e-2}$\\
$2$&$2$&$Inf$&$\color{blue}{2.71e-2}$&$\color{blue}{6.29e-2}$&$\color{blue}{3.32e-2}$&$\color{red}{3.10e-2}$&$\color{red}{3.01e-2}$\\
$3$&$3$&$Inf$&$\color{blue}{3.96e-2}$&$\color{blue}{2.92e-2}$&$\color{blue}{3.13e-2}$&$\color{red}{5.19e-2}$&$\color{red}{4.27e-2}$\\
$4$&$4$&$Inf$&$\color{blue}{4.66e-2}$&$\color{blue}{3.85e-2}$&$\color{blue}{4.57e-2}$&$\color{red}{5.45e-2}$&$\color{red}{5.09e-2}$\\
$6$&$6$&$Inf$&$7.00e-2$&$\color{blue}{7.95e-2}$&$\color{blue}{9.74e-2}$&$\color{red}{7.23e-2}$&$\color{red}{8.06e-2}$\\
$8$&$8$&$Inf$&$Inf$&$\color{blue}{9.74e-2}$&$\color{blue}{8.03e-2}$&$\color{red}{6.11e-2}$&$\color{red}{6.33e-2}$\\
$10$&$10$&$Inf$&$Inf$&$\color{blue}{8.49e-2}$&$\color{blue}{8.89e-2}$&$\color{red}{1.15e-1}$&$\color{red}{7.48e-2}$\\
$12$&$12$&$Inf$&$Inf$&$1.02e-1$&$\color{blue}{1.59e-1}$&$\color{red}{1.07e-1}$&$\color{red}{8.81e-2}$\\
$14$&$14$&$1.02e-1$&$Inf$&$Inf$&$\color{blue}{1.46e-1}$&$\color{red}{1.04e-1}$&$\color{red}{1.73e-1}$\\
$16$&$16$&$Inf$&$Inf$&$Inf$&$\color{blue}{1.33e-1}$&$\color{red}{1.32e-1}$&$\color{red}{1.71e-1}$\\
\hline
\end{tabular}
\end{center}

\begin{center}
\small
\tabcaption{Computation time of Method 1 with $m=0$}
\label{n6ta4}
\begin{tabular}{c|c|cccccc}
\hline
n & m & $\kappa=1$ & $\kappa=5$ & $\kappa=10$ & $\kappa=20$ & $\kappa=50$ & $\kappa=100$\\
\hline
$0$&$0$&$\color{blue}{4.47e-2}$&$\color{blue}{8.95e-2}$&$\color{blue}{1.29e-1}$&$\color{blue}{1.97e-1}$&$\color{red}{3.02e-1}$&$\color{red}{Inf}$\\
$1$&$0$&$\color{blue}{1.40e-2}$&$\color{blue}{1.73e-2}$&$\color{blue}{1.80e-2}$&$\color{blue}{1.76e-2}$&$\color{red}{1.57e-2}$&$\color{red}{1.24e-2}$\\
$2$&$0$&$4.16e-2$&$\color{blue}{5.92e-2}$&$\color{blue}{1.21e-1}$&$\color{blue}{1.47e-1}$&$\color{red}{3.03e-1}$&$\color{red}{Inf}$\\
$3$&$0$&$1.43e-2$&$\color{blue}{1.66e-2}$&$\color{blue}{1.46e-2}$&$\color{blue}{1.68e-2}$&$\color{red}{1.07e-2}$&$\color{red}{1.73e-2}$\\
$4$&$0$&$4.80e-2$&$\color{blue}{5.84e-2}$&$\color{blue}{9.59e-2}$&$\color{blue}{1.33e-1}$&$\color{red}{2.20e-1}$&$\color{red}{Inf}$\\
$6$&$0$&$3.61e-2$&$5.60e-2$&$\color{blue}{7.81e-2}$&$\color{blue}{1.05e-1}$&$\color{red}{1.88e-1}$&$\color{red}{3.96e-1}$\\
$8$&$0$&$4.18e-2$&$5.31e-2$&$\color{blue}{7.03e-2}$&$\color{blue}{9.56e-2}$&$\color{red}{1.82e-1}$&$\color{red}{4.14e-1}$\\
$10$&$0$&$3.72e-2$&$4.85e-2$&$\color{blue}{6.99e-2}$&$\color{blue}{9.29e-2}$&$\color{red}{1.48e-1}$&$\color{red}{2.69e-1}$\\
$12$&$0$&$4.49e-2$&$6.40e-2$&$7.08e-2$&$\color{blue}{9.17e-2}$&$\color{red}{1.52e-1}$&$\color{red}{1.89e-1}$\\
$14$&$0$&$3.55e-2$&$5.78e-2$&$7.14e-2$&$\color{blue}{8.93e-2}$&$\color{red}{1.45e-1}$&$\color{red}{1.82e-1}$\\
$16$&$0$&$3.59e-2$&$5.24e-2$&$6.02e-2$&$\color{blue}{9.19e-2}$&$\color{red}{1.17e-1}$&$\color{red}{1.28e-1}$\\
\hline
\end{tabular}
\end{center}

\begin{center}
\small
\tabcaption{Computation time of Method 2 with $m=0$}
\label{n6ta5}
\begin{tabular}{c|c|cccccc}
\hline
n & m & $\kappa=1$ & $\kappa=5$ & $\kappa=10$ & $\kappa=20$ & $\kappa=50$ & $\kappa=100$\\
\hline
$0$&$0$&$\color{blue}{2.13e-2}$&$\color{blue}{Inf}$&$\color{blue}{Inf}$&$\color{blue}{Inf}$&$\color{red}{2.23e-2}$&$\color{red}{1.43e-2}$\\
$1$&$0$&$\color{blue}{1.76e-2}$&$\color{blue}{2.02e-2}$&$\color{blue}{2.44e-2}$&$\color{blue}{2.25e-2}$&$\color{red}{1.77e-2}$&$\color{red}{1.65e-2}$\\
$2$&$0$&$2.20e-2$&$\color{blue}{Inf}$&$\color{blue}{Inf}$&$\color{blue}{Inf}$&$\color{red}{3.18e-2}$&$\color{red}{1.56e-2}$\\
$3$&$0$&$1.38e-2$&$\color{blue}{1.87e-2}$&$\color{blue}{1.95e-2}$&$\color{blue}{2.06e-2}$&$\color{red}{1.46e-2}$&$\color{red}{1.73e-2}$\\
$4$&$0$&$2.26e-2$&$\color{blue}{Inf}$&$\color{blue}{Inf}$&$\color{blue}{Inf}$&$\color{red}{1.35e-2}$&$\color{red}{1.71e-2}$\\
$6$&$0$&$2.11e-2$&$Inf$&$\color{blue}{Inf}$&$\color{blue}{1.45e-2}$&$\color{red}{1.89e-2}$&$\color{red}{1.84e-2}$\\
$8$&$0$&$2.10e-2$&$Inf$&$\color{blue}{Inf}$&$\color{blue}{1.97e-2}$&$\color{red}{1.75e-2}$&$\color{red}{1.76e-2}$\\
$10$&$0$&$1.75e-2$&$Inf$&$\color{blue}{Inf}$&$\color{blue}{2.14e-2}$&$\color{red}{1.71e-2}$&$\color{red}{1.92e-2}$\\
$12$&$0$&$1.71e-2$&$Inf$&$Inf$&$\color{blue}{1.49e-2}$&$\color{red}{1.46e-2}$&$\color{red}{1.82e-2}$\\
$14$&$0$&$1.98e-2$&$Inf$&$Inf$&$\color{blue}{2.27e-2}$&$\color{red}{1.45e-2}$&$\color{red}{1.67e-2}$\\
$16$&$0$&$1.95e-2$&$Inf$&$Inf$&$\color{blue}{2.03e-2}$&$\color{red}{1.95e-2}$&$\color{red}{1.84e-2}$\\
\hline
\end{tabular}
\end{center}

\begin{center}
\small
\tabcaption{Computation time of Method 3 with $m=0$}
\label{n6ta6}
\begin{tabular}{c|c|cccccc}
\hline
n & m & $\kappa=1$ & $\kappa=5$ & $\kappa=10$ & $\kappa=20$ & $\kappa=50$ & $\kappa=100$\\
\hline
$0$&$0$&$\color{blue}{7.32e-3}$&$\color{blue}{1.00e-2}$&$\color{blue}{1.08e-2}$&$\color{blue}{1.07e-2}$&$\color{red}{9.85e-3}$&$\color{red}{1.16e-2}$\\
$1$&$0$&$\color{blue}{1.17e-2}$&$\color{blue}{1.39e-2}$&$\color{blue}{1.49e-2}$&$\color{blue}{1.32e-2}$&$\color{red}{1.21e-2}$&$\color{red}{1.48e-2}$\\
$2$&$0$&$3.25e-2$&$\color{blue}{2.86e-2}$&$\color{blue}{3.26e-2}$&$\color{blue}{2.48e-2}$&$\color{red}{3.13e-2}$&$\color{red}{3.17e-2}$\\
$3$&$0$&$1.94e-2$&$\color{blue}{2.66e-2}$&$\color{blue}{2.46e-2}$&$\color{blue}{2.36e-2}$&$\color{red}{1.71e-2}$&$\color{red}{2.60e-2}$\\
$4$&$0$&$Inf$&$\color{blue}{3.21e-2}$&$\color{blue}{2.83e-2}$&$\color{blue}{2.85e-2}$&$\color{red}{3.03e-2}$&$\color{red}{3.00e-2}$\\
$6$&$0$&$Inf$&$2.90e-2$&$\color{blue}{2.53e-2}$&$\color{blue}{3.24e-2}$&$\color{red}{3.46e-2}$&$\color{red}{4.10e-2}$\\
$8$&$0$&$Inf$&$4.37e-2$&$\color{blue}{3.11e-2}$&$\color{blue}{2.44e-2}$&$\color{red}{3.70e-2}$&$\color{red}{2.76e-2}$\\
$10$&$0$&$Inf$&$3.07e-2$&$\color{blue}{2.72e-2}$&$\color{blue}{3.05e-2}$&$\color{red}{3.13e-2}$&$\color{red}{2.62e-2}$\\
$12$&$0$&$Inf$&$Inf$&$3.04e-2$&$\color{blue}{2.52e-2}$&$\color{red}{2.85e-2}$&$\color{red}{3.29e-2}$\\
$14$&$0$&$Inf$&$Inf$&$3.01e-2$&$\color{blue}{3.16e-2}$&$\color{red}{3.14e-2}$&$\color{red}{3.25e-2}$\\
$16$&$0$&$Inf$&$Inf$&$3.05e-2$&$\color{blue}{3.08e-2}$&$\color{red}{3.53e-2}$&$\color{red}{2.96e-2}$\\
\hline
\end{tabular}
\end{center}

\begin{center}
\small
\tabcaption{Computation time of Method 1 with $n=0$}
\label{n6ta7}
\begin{tabular}{c|c|cccccc}
\hline
n & m & $\kappa=1$ & $\kappa=5$ & $\kappa=10$ & $\kappa=20$ & $\kappa=50$ & $\kappa=100$\\
\hline
$0$&$0$&$\color{blue}{4.47e-2}$&$\color{blue}{6.61e-2}$&$\color{blue}{1.05e-1}$&$\color{blue}{1.30e-1}$&$\color{red}{2.85e-1}$&$\color{red}{Inf}$\\
$0$&$1$&$\color{blue}{4.23e-2}$&$\color{blue}{6.70e-2}$&$\color{blue}{1.06e-1}$&$\color{blue}{1.50e-1}$&$\color{red}{2.50e-1}$&$\color{red}{Inf}$\\
$0$&$2$&$3.51e-2$&$\color{blue}{6.40e-2}$&$\color{blue}{1.15e-1}$&$\color{blue}{1.78e-1}$&$\color{red}{2.46e-1}$&$\color{red}{Inf}$\\
$0$&$3$&$3.65e-2$&$\color{blue}{5.73e-2}$&$\color{blue}{9.20e-2}$&$\color{blue}{1.41e-1}$&$\color{red}{2.92e-1}$&$\color{red}{Inf}$\\
$0$&$4$&$2.84e-2$&$\color{blue}{5.02e-2}$&$\color{blue}{8.86e-2}$&$\color{blue}{1.16e-1}$&$\color{red}{2.63e-1}$&$\color{red}{Inf}$\\
$0$&$6$&$2.25e-2$&$5.27e-2$&$\color{blue}{8.67e-2}$&$\color{blue}{1.26e-1}$&$\color{red}{2.21e-1}$&$\color{red}{Inf}$\\
$0$&$8$&$1.62e-2$&$4.67e-2$&$\color{blue}{8.26e-2}$&$\color{blue}{1.38e-1}$&$\color{red}{2.31e-1}$&$\color{red}{Inf}$\\
$0$&$10$&$1.04e-2$&$3.46e-2$&$\color{blue}{6.43e-2}$&$\color{blue}{1.11e-1}$&$\color{red}{2.42e-1}$&$\color{red}{Inf}$\\
$0$&$12$&$1.45e-2$&$2.84e-2$&$7.75e-2$&$\color{blue}{1.43e-1}$&$\color{red}{2.11e-1}$&$\color{red}{Inf}$\\
$0$&$14$&$1.57e-2$&$2.56e-2$&$7.09e-2$&$\color{blue}{1.03e-1}$&$\color{red}{2.10e-1}$&$\color{red}{Inf}$\\
$0$&$16$&$1.49e-2$&$2.73e-2$&$4.96e-2$&$\color{blue}{9.85e-2}$&$\color{red}{1.99e-1}$&$\color{red}{Inf}$\\
\hline
\end{tabular}
\end{center}

\begin{center}
\small
\tabcaption{Computation time of Method 2 with $n=0$}
\label{n6ta8}
\begin{tabular}{c|c|cccccc}
\hline
n & m & $\kappa=1$ & $\kappa=5$ & $\kappa=10$ & $\kappa=20$ & $\kappa=50$ & $\kappa=100$\\
\hline
$0$&$0$&$\color{blue}{1.69e-2}$&$\color{blue}{Inf}$&$\color{blue}{Inf}$&$\color{blue}{Inf}$&$\color{red}{1.37e-2}$&$\color{red}{1.76e-2}$\\
$0$&$1$&$\color{blue}{1.79e-2}$&$\color{blue}{1.71e-2}$&$\color{blue}{2.20e-2}$&$\color{blue}{1.95e-2}$&$\color{red}{1.80e-2}$&$\color{red}{1.64e-2}$\\
$0$&$2$&$Inf$&$\color{blue}{Inf}$&$\color{blue}{Inf}$&$\color{blue}{Inf}$&$\color{red}{1.41e-2}$&$\color{red}{1.81e-2}$\\
$0$&$3$&$1.78e-2$&$\color{blue}{1.62e-2}$&$\color{blue}{1.56e-2}$&$\color{blue}{1.70e-2}$&$\color{red}{2.02e-2}$&$\color{red}{1.57e-2}$\\
$0$&$4$&$Inf$&$\color{blue}{Inf}$&$\color{blue}{Inf}$&$\color{blue}{Inf}$&$\color{red}{1.69e-2}$&$\color{red}{1.58e-2}$\\
$0$&$6$&$Inf$&$Inf$&$\color{blue}{Inf}$&$\color{blue}{Inf}$&$\color{red}{1.79e-2}$&$\color{red}{1.85e-2}$\\
$0$&$8$&$Inf$&$Inf$&$\color{blue}{Inf}$&$\color{blue}{Inf}$&$\color{red}{2.02e-2}$&$\color{red}{1.91e-2}$\\
$0$&$10$&$Inf$&$Inf$&$\color{blue}{Inf}$&$\color{blue}{Inf}$&$\color{red}{1.94e-2}$&$\color{red}{1.70e-2}$\\
$0$&$12$&$Inf$&$Inf$&$Inf$&$\color{blue}{Inf}$&$\color{red}{1.58e-2}$&$\color{red}{2.02e-2}$\\
$0$&$14$&$Inf$&$Inf$&$Inf$&$\color{blue}{Inf}$&$\color{red}{1.67e-2}$&$\color{red}{1.51e-2}$\\
$0$&$16$&$Inf$&$Inf$&$Inf$&$\color{blue}{Inf}$&$\color{red}{2.10e-2}$&$\color{red}{1.91e-2}$\\
\hline
\end{tabular}
\end{center}

\begin{center}
\small
\tabcaption{Computation time of Method 3 with $n=0$}
\label{n6ta9}
\begin{tabular}{c|c|cccccc}
\hline
n & m & $\kappa=1$ & $\kappa=5$ & $\kappa=10$ & $\kappa=20$ & $\kappa=50$ & $\kappa=100$\\
\hline
$0$&$0$&$\color{blue}{9.68e-3}$&$\color{blue}{6.28e-3}$&$\color{blue}{8.99e-3}$&$\color{blue}{7.99e-3}$&$\color{red}{8.32e-3}$&$\color{red}{8.87e-3}$\\
$0$&$1$&$\color{blue}{1.07e-2}$&$\color{blue}{1.30e-2}$&$\color{blue}{1.30e-2}$&$\color{blue}{1.40e-2}$&$\color{red}{1.03e-2}$&$\color{red}{1.24e-2}$\\
$0$&$2$&$3.39e-2$&$\color{blue}{2.49e-2}$&$\color{blue}{3.61e-2}$&$\color{blue}{2.49e-2}$&$\color{red}{2.73e-2}$&$\color{red}{2.80e-2}$\\
$0$&$3$&$2.62e-2$&$\color{blue}{2.53e-2}$&$\color{blue}{2.40e-2}$&$\color{blue}{3.06e-2}$&$\color{red}{2.91e-2}$&$\color{red}{2.34e-2}$\\
$0$&$4$&$3.18e-2$&$\color{blue}{3.45e-2}$&$\color{blue}{3.02e-2}$&$\color{blue}{2.97e-2}$&$\color{red}{4.06e-2}$&$\color{red}{3.59e-2}$\\
$0$&$6$&$4.06e-2$&$3.92e-2$&$\color{blue}{3.58e-2}$&$\color{blue}{4.02e-2}$&$\color{red}{4.14e-2}$&$\color{red}{4.10e-2}$\\
$0$&$8$&$4.44e-2$&$4.20e-2$&$\color{blue}{5.26e-2}$&$\color{blue}{5.54e-2}$&$\color{red}{4.35e-2}$&$\color{red}{4.53e-2}$\\
$0$&$10$&$4.94e-2$&$4.82e-2$&$\color{blue}{5.04e-2}$&$\color{blue}{5.62e-2}$&$\color{red}{5.07e-2}$&$\color{red}{4.47e-2}$\\
$0$&$12$&$6.90e-2$&$9.53e-2$&$7.15e-2$&$\color{blue}{5.96e-2}$&$\color{red}{6.99e-2}$&$\color{red}{5.90e-2}$\\
$0$&$14$&$6.04e-2$&$5.95e-2$&$7.40e-2$&$\color{blue}{6.00e-2}$&$\color{red}{7.02e-2}$&$\color{red}{7.09e-2}$\\
$0$&$16$&$9.59e-2$&$8.19e-2$&$6.90e-2$&$\color{blue}{8.19e-2}$&$\color{red}{7.29e-2}$&$\color{red}{6.87e-2}$\\
\hline
\end{tabular}
\end{center}

\section*{Acknowledgment}

This work was partially supported by the National Natural Science
Foundation of China under grants 11271370.

\bibliographystyle{plain}

\end{document}